\newtheorem{thm}{Theorem}[section]
\newtheorem{lem}{Lemma}[section]
\newtheorem{prop}{Proposition}[section]
\theoremstyle{definition}
\theoremstyle{remark}
\newtheorem{example}{Example}[section]
\numberwithin{equation}{section}
\newcommand{\eps}{\varepsilon}
\newcommand{\F}{\mathcal{F}}
\newcommand{\E}{\mathbb{E}}
\newcommand{\N}{\mathbb{N}}
\newcommand{\PP}{\mathbb{P}}
\newcommand{\K}{\mathcal{K}}
\newcommand{\cB}{\mathcal{B}}
\newcommand{\R}{\mathbb{R}}
\newcommand{\cC}{\mathcal{C}}
\numberwithin{equation}{section}
\newcommand{\bed}{\begin{displaymath}}
\newcommand{\eed}{\end{displaymath}}
\newcommand{\bea}{\bed\begin{array}{rl}}
\newcommand{\eea}{\end{array}\eed}
\newcommand{\ad}{&\!\!\!\disp}
\newcommand{\aad}{&\disp}
\newcommand{\barray}{\begin{array}{ll}}
\newcommand{\earray}{\end{array}}
\def\disp{\displaystyle}
\newcommand{\1}{\boldsymbol{1}}
\def\bar{\overline}
\def\hat{\widehat}
\def\a.s{\text{\;a.s.\;}}
\begin{document}
\title{Classification of Asymptotic Behavior in A Stochastic SIR Model}
\author{N. T. Dieu,\thanks{Department of Mathematics, Vinh University,
182 Le Duan, Vinh, Nghe An, Vietnam, dieunguyen2008@gmail.com. The
research of this author was supported in part by the Foundation for Science and
Technology Development of Vietnam's Ministry of Education and
Training. No. B2015-27-15. The
author would also like
to thank Vietnam
Institute for Advance Study in Mathematics (VIASM) for supporting
and providing a fruitful research environment and hospitality.} \and
  D.H.  Nguyen,\thanks{Department of Mathematics, Wayne State University, Detroit, MI
48202, USA,
dangnh.maths@gmail.com.
The
research of this author was supported in part by the National Science Foundation under grant DMS-1207667. This research
was finished when the author was
visiting
the Institute for Advance Study in Mathematics (VIASM).
} \and
N.H. Du,\thanks{Department of Mathematics, Mechanics and
Informatics, Hanoi National University,
 334 Nguyen Trai, Thanh Xuan, Hanoi Vietnam, dunh@vnu.edu.vn. This research was
supported in part by
Vietnam National Foundation for Science and Technology Development   (NAFOSTED)
n$_0$  101.03-2014.58.}
 \and
G. Yin\thanks{Corresponding author: Department of Mathematics, Wayne State University, Detroit, MI
48202, USA, gyin@math.wayne.edu. The
research of this author was supported in part by the National Science Foundation 
under grant DMS-1207667. The author  thank VIASM for the support and hospitality during his visit.}
}
\maketitle

\begin{abstract}
This paper investigates
asymptotic behavior of a stochastic SIR epidemic model,
which is a system with degenerate diffusion.
It gives
sufficient conditions that are very close to the necessary conditions
for the permanence.
In addition,
 this paper develops ergodicity of the underlying system. It is
proved that  the transition probabilities  converge in
 total variation norm  to  the invariant measure.
Our result gives a precise characterization of the support of the invariant measure. 
Rates of convergence are also  ascertained.
It is shown that
the rate is not too far from
exponential in that the convergence speed is of the form of a polynomial of any degree.

\medskip

\noindent {\bf Keywords.} SIR model; Extinction;  Permanence;
Stationary Distribution; Ergodicity.

\medskip
\noindent{\bf Subject Classification.} 34C12, 60H10, 92D25.

\medskip
\noindent{\bf Running Title.} Classification in A Stochastic SIR Model
\end{abstract}

\newpage
\setlength{\baselineskip}{0.28in}
\section{Introduction}\label{sec:int}
Since epidemic  models were  first introduced
  by  Kermack  and  McKendrick in \cite{KM,KM1},
 the study on
  mathematical models
has been flourished. Much attention has been devoted to
analyzing, predicting the spread, and designing controls of infectious diseases in host populations; see 
\cite{ABMSGS,BS, BC,  vC, GC, KRW, Kor, KM, KM1, SBKS, WZ} and the references therein.
One of classic epidemic models is
the SIR  (Susceptible-Infected-Removed) model
  that is suitable for modeling some diseases with permanent immunity such as rubella, whooping cough, measles, smallpox, etc.
  In the SIR model, a homogeneous host population is subdivided into three epidemiologically distinct types of individuals:
\begin{itemize}
\item (S): The susceptible class,
the class of those individuals who are capable of contracting the disease and becoming infective,
\item (I): the infective class,
the class of those individuals who are capable of transmitting the disease to others,
\item (R): the removed class,
the class of infected individuals who  are dead, or have recovered, and are permanently
immune, or are isolated.
\end{itemize}
If we denote by $S(t), I(t), R(t)$ the number of individuals at time $t$ in classes  (S), (I), and (R), respectively, the spread of infection can be formulated by the following deterministic system of differential equations:
\begin{equation}\label{e1.0}
\begin{cases}
dS(t)=\big(\alpha-\beta S(t)I(t)-\mu S(t))dt  \\
dI(t)=\big(\beta S(t)I(t)-(\mu+\rho+ \gamma) I(t))dt  \\
dR(t)=(\gamma I(t) - \mu R(t))dt,
\end{cases}
\end{equation}
where $\alpha$ is  the per capita birth rate of the population, $\mu$
is the per capita disease-free death rate and  $\rho$ is the excess per capita death rate of  infective class,  $\beta$ is the effective per capita contact rate, and $\gamma$ is per capita recovery rate of the infective individuals.
On the other hand, it is well recognized that the population is
always subject to random disturbances and
it is desirable to learn how randomness effects the models.
Thus, it is 
important to investigate stochastic epidemic models.
Jiang et al. \cite{JYS} investigated the asymptotic behavior
of global positive solution for the non-degenerate stochastic SIR model
\begin{equation}\label{e1.1a}
\begin{cases}
dS(t)=\big(\alpha-\beta S(t)I(t)-\mu S(t))dt+\sigma_1 S(t)dB_1(t)  \\
dI(t)=\big(\beta S(t)I(t)-(\mu +\rho+ \gamma) I(t))dt+\sigma_2 I(t)dB_2(t)  \\
dR(t)=(\gamma I(t) - \mu R(t))dt+\sigma_3 R(t)dB_3(t),
\end{cases}
\end{equation}
where $B_1(t)$, $B_2(t)$, and $B_3(t)$ are mutually independent
Brownian motions, $\sigma_1, \sigma_2, \sigma_3$ are the intensities
of the white noises.  However, in reality, the classes (S), (I), and (R)
are usually subject to the same random factors such as temperature, humidity, pollution and other extrinsic influences.
As a result, it is more plausible to assume that the random noise perturbing  the three classes is correlated.
If we assume that the Brownian motions $B_1(t)$, $B_2(t)$, and $B_3(t)$ are the same, we obtain the following model
\begin{equation}\label{e1.1bs}
\begin{cases}
dS(t)=\big(\alpha-\beta S(t)I(t)-\mu S(t))dt+\sigma_1 S(t)dB(t)  \\
dI(t)=\big(\beta S(t)I(t)-(\mu +\rho+ \gamma) I(t))dt+\sigma_2 I(t)dB(t)  \\
dR(t)=(\gamma I(t) - \mu R(t))dt+\sigma_3 R(t)dB(t),
\end{cases}
\end{equation}
which has been considered in \cite{LJX}.
Compared to \eqref{e1.1a}, \eqref{e1.1bs} is more difficult to deal with due to the degeneracy of the diffusion.
One of the important questions is concerned with
whether the
transition
to a disease free state or the disease state will
survive permanently.
For the deterministic model \eqref{e1.0}, the asymptotic behavior has been classified completely as follows.
If $\lambda_d=\dfrac{\beta\alpha}\mu-(\mu+\rho+\gamma)\leq0$, then the population tends to the disease-free equilibrium $(\dfrac{\alpha}\mu,0,0)$ while the population approaches an endemic equilibrium in case $\lambda_d>0$. In \cite{WZ}, similar results are given for a general epidemic model with reaction-diffusion in terms of basic reproduction numbers.
In \cite{LJX},  the authors attempted to answer the aforementioned question for
\eqref{e1.1bs} in case $\sigma_1>0$ and $\sigma_2>0$. By
using  Lyapunov-type functions, they provided some sufficient conditions for extinction or permanence as well as ergordicity for  the solution of system \eqref{e1.1bs}. Using the same methods, the extinction and permanence in some different stochastic SIR models have been studied in \cite{JJS,  YWS} etc. In practice,
 because of the randomness
 and the degeneracy of the diffusion, the model is much more difficult to deal with compared in contrast  to the deterministic counter part.
 Moreover,
 although one may assume the existence of appropriate Lyapunov function,
 it is fairly
  difficult to find an effective Lyapunov function in practice.
In other words, there has been no
decisive classification for stochastic SIR models that is similar to the deterministic case.

Our main goal in this paper is to
provide such a classification. We shall derive
a sufficient and almost necessary condition for permanence (as
well as ergodicity)  and extinction of the disease for the stochastic SIR model \eqref{e1.1bs} by using a value $\lambda$, which is similar to $\lambda_d$ in the deterministic model.
Note that such kind of results are obtained for a stochastic SIS model in \cite{GGHMP}.
However, the model studied there can be reduced to one-dimensional equation 
that is much easier to investigate.
The method used in \cite{GGHMP} cannot treat the stochastic SIR model \eqref{e1.1bs}. Estimation for the convergence rate is also not given in \cite{GGHMP}.  A more general method therefore need to be introduced.
The new method  can remove most assumptions in \cite{LJX}  as well as   can
 treat the case $\sigma_1>0, \sigma_2<0$,  which has not been taken into consideration in  \cite{LJX}.
 Note that the case $\sigma_1>0$ and $\sigma_2<0$ indicates the random factors have opposite effects to healthy individuals and infected ones.
 For instance,  patients with  tuberculosis or some other
 pulmonary disease do not endure well in cool  and humid weather while healthy people may
 be fine in such kind of weather.
In addition, individuals with a disease,
usually have weaker resistance to
some other kinds of disease.
Our new method is also suitable to  deal with other stochastic variants of \eqref{e1.0} such as models introduced in \cite{CKBW,JJS,YWS}, etc.

 The rest of the paper is arranged as follows. Section \ref{sec:thr} derives a threshold that is used
to classify the extinction and permanence of the disease. To establish the desired result, by considering
the dynamics on the boundary, we obtain a threshold $\lambda$ that enables us to determine the
asymptotic behavior of the solution. In particular, it is shown that if $\lambda < 0,$ the disease will decay in an exponential rate. In case $\lambda > 0$, the solution converges to a stationary distribution in total variation. It means that the disease is permanent. The rate of convergence is proved to be bounded above by any polynomial decay.
The ergodicity of the solution process is also proved.
Finally, Section \ref{sec:ex} is devoted to some discussion and comparison to
 existing results in the literature. Some numerical examples
 are  provided to illustrate our results.

\section{Threshold Between Extinction and Permanence}\label{sec:thr}
Let $(\Omega,\F,\{\F_t\}_{t\geq0},\PP)$ be a complete probability space with the filtration $\{\F_t\}_{t\geq 0}$ satisfying the usual condition,  i.e., it is increasing and right continuous while $\F_0$ contains all $\PP$-null sets. Let $B(t)$
be an $\F_t$-adapted,
Brownian motions. Because the dynamics of class of recover has no effect on the disease transmission dynamics,  we only consider the following system:
\begin{equation}\label{e1.1}\begin{cases}
dS(t)=[\alpha -\beta S(t)I(t)-\mu S(t)]dt+\sigma_1 S(t)dB(t),\\
dI(t)=[\beta S(t)I(t)-(\mu+\rho+\gamma)I(t)]dt+\sigma_2 I(t)dB(t).
\end{cases}
\end{equation}
Assume that $\sigma_1,\sigma_2\ne0$. By the symmetry of Brownian motions,  without loss of generality,  we suppose throughout this paper that $\sigma_1>0.$
Using standard arguments, it can be easily shown that for any positive initial value $(u,v)\in\R^{2,\circ}_+:=\{(u',v'): u'>0, v'>0\}$, there exists uniquely a global solution $(S_{u,v}(t), I_{u,v}(t)), t\geq0$ that remains in  $\R^{2,\circ}_+$ with probability 1 (see e.g.,  \cite{JYS}).
To obtain further properties of the solution,
we first consider  the equation on the boundary,
\begin{equation}\label{e2.1}
d\hat S(t)=(\alpha-\mu\hat S(t))dt+\sigma_1\hat S(t)dB(t).
\end{equation}
Let $\hat S_u(t)$ be the solution to \eqref{e2.1} with initial value $u$.
It follows from the comparison theorem \cite[Theorem 1.1, p.437]{IW} that $S_{u,v}(t)\leq \hat S_{u}(t)\; \forall t\geq0$ a.s. By solving the Fokker-Planck equation,  the process $\hat S_{u}(t)$ has
a unique stationary distribution with density
\begin{equation}\label{density}
f^*(x)=\dfrac{b^a}{\Gamma(a)} x^{-(a+1)}e^{\frac{-b}x}, x>0
\end{equation}
where $c_1=\mu+\frac{\sigma_1^2}{2}, a=\frac{2c_1}{\sigma_1^2}, b=\frac{2\alpha}{\sigma_1^2} $ and
 $\Gamma(\cdot)$ is the Gamma function.
By the strong law of large number we deduce that
\begin{equation}\label{e2.3'}
\lim\limits_{t\to\infty}\frac1t \int_0^t\hat S_{u}(s)ds=\int_{0}^\infty xf^*(x)dx:=\frac{\alpha}{\mu} \ \hbox{ a.s.}
\end{equation}
To proceed,
we
define the threshold as follows:
\begin{equation}
\label{lambda}
  \lambda:=\frac{\alpha\beta}{\mu}-\big(\mu+\rho+\gamma+\frac{\sigma^2_2}{2}\big).
\end{equation}

\subsection{Case 1: $\lambda<0$}
\begin{thm}\label{thm2.1}
If $\lambda<0$, then for any initial value $(S(0), I(0))=(u, v)\in\R_+^{2,\circ}$ we have
$\limsup\limits_{t\to\infty}\dfrac{\ln I_{u,v}(t)}t\leq\lambda$ a.s. and  the distribution of $S_{u,v}(t)$ converges weakly to
 the unique invariant probability measure $\mu^*$ with the density $f^*$.
 \end{thm}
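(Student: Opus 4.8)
\noindent\emph{Proof proposal.} The plan is to prove the two assertions separately. For the growth bound on $I$, apply It\^o's formula to $\ln I_{u,v}(t)$ in the second equation of \eqref{e1.1}:
\begin{equation*}
\frac{\ln I_{u,v}(t)}{t}=\frac{\ln v}{t}+\frac1t\int_0^t\Big(\beta S_{u,v}(s)-\mu-\rho-\gamma-\tfrac{\sigma_2^2}{2}\Big)\,ds+\frac{\sigma_2B(t)}{t}.
\end{equation*}
Letting $t\to\infty$, the first and last terms vanish a.s.\ by the strong law of large numbers for Brownian motion, while $S_{u,v}(s)\le\hat S_u(s)$ together with \eqref{e2.3'} gives $\limsup_{t\to\infty}\frac1t\int_0^t\beta S_{u,v}(s)\,ds\le\alpha\beta/\mu$ a.s. Hence $\limsup_{t\to\infty}t^{-1}\ln I_{u,v}(t)\le\alpha\beta/\mu-\mu-\rho-\gamma-\sigma_2^2/2=\lambda<0$ a.s., and in particular $I_{u,v}(t)\to0$ a.s.

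For the convergence of the law of $S_{u,v}(t)$, the idea is that once $I_{u,v}(t)\to0$ the $S$-equation is a vanishing perturbation of \eqref{e2.1}, so $S_{u,v}(t)$ is pathwise asymptotic to $\hat S_u(t)$. Concretely, put $Y(t):=\hat S_u(t)-S_{u,v}(t)\ge0$ (comparison theorem); from \eqref{e1.1} and \eqref{e2.1}, $Y$ solves $dY=(-\mu Y+\beta S_{u,v}I_{u,v})\,dt+\sigma_1Y\,dB$ with $Y(0)=0$, so by variation of constants (integrating factor $\exp(\sigma_1B(t)-c_1t)$, $c_1=\mu+\tfrac{\sigma_1^2}{2}$)
\begin{equation*}
Y(t)=\int_0^t\exp\!\big(\sigma_1(B(t)-B(s))-c_1(t-s)\big)\,\beta S_{u,v}(s)I_{u,v}(s)\,ds.
\end{equation*}
Fix $\eps\in(0,c_1)$ with $\lambda+\eps<0$. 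By the law of the iterated logarithm, $\sup_{r\le t}|B(r)|=o(t)$ a.s., and by solving \eqref{e2.1} explicitly one sees that $\hat S_u$ grows subexponentially; hence there is an a.s.\ finite random time $T$ beyond which $\beta\hat S_u(s)I_{u,v}(s)\le e^{(\lambda+\eps)s}$ and $\exp(\sigma_1(B(t)-B(s)))\le e^{\eps t}$ for $0\le s\le t$. Splitting $\int_0^t=\int_0^T+\int_T^t$, the part over $[0,T]$ is at most $e^{\eps t-c_1(t-T)}\beta\int_0^TS_{u,v}I_{u,v}\,ds\to0$, and the part over $[T,t]$ is at most $e^{\eps t-c_1t}\int_T^te^{(c_1+\lambda+\eps)s}\,ds$, which—whatever the sign of $c_1+\lambda+\eps$—is bounded by a finite random constant times $e^{(\lambda+2\eps)t}+t\,e^{-(c_1-\eps)t}\to0$. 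Therefore $Y(t)\to0$ a.s.

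To finish: the diffusion \eqref{e2.1} is nondegenerate on $(0,\infty)$ with both endpoints inaccessible and has the unique invariant probability $\mu^*$ with density $f^*$, so it is ergodic and the law of $\hat S_u(t)$ converges weakly (indeed in total variation) to $\mu^*$. Since $S_{u,v}(t)=\hat S_u(t)-Y(t)$ with $Y(t)\to0$ a.s., hence in probability, the converging-together (Slutsky) lemma yields weak convergence of the law of $S_{u,v}(t)$ to $\mu^*$. I expect the main obstacle to be the pathwise estimate giving $Y(t)\to0$: one must control the exponential-of-Brownian-increment factor and $\sup_{[0,t]}\hat S_u$ uniformly in $t$ and balance them against the exponential decay of $I_{u,v}$ from the first step. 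Everything else is routine once the comparison $S_{u,v}\le\hat S_u$ and the ergodic average \eqref{e2.3'} are in hand.
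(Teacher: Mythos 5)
Your proposal is correct and follows the same overall strategy as the paper (exponential decay of $I$ via the comparison $S\le\hat S$ and the ergodic average \eqref{e2.3'}, then transfer of the known convergence of $\hat S_u(t)$ to $S_{u,v}(t)$), but the second half is executed by a genuinely different technique. The paper never touches the raw difference $\hat S-S$: it applies It\^o's formula to $\ln\hat S-\ln S$ (restarting $\hat S$ from $S(t_0)$ at a large time $t_0$), observes that the drift term $\alpha(1/\hat S-1/S)$ is nonpositive by the comparison theorem, and so gets $0\le\ln\hat S(t)-\ln S(t)\le\beta\int_{t_0}^tI\,d\tau<\eps$ on an event of probability $\ge1-\eps$; a Portmanteau argument with Lipschitz test functions then converts this in-probability closeness into weak convergence. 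You instead solve the linear SDE for $Y=\hat S-S$ by variation of constants and prove $Y(t)\to0$ almost surely, finishing with Slutsky. Your route yields a strictly stronger pathwise conclusion ($|\hat S-S|\to0$ a.s.\ rather than $|\ln\hat S-\ln S|$ small in probability), at the price of having to control $\exp(\sigma_1(B(t)-B(s)))$ uniformly in $s\le t$ and the subexponential growth of $\hat S_u$ — estimates the paper's logarithmic trick avoids entirely, since there the Brownian terms cancel exactly and the offending drift term has a sign. Your argument is sound; the only slip is bookkeeping: with your splitting the tail integral is of order $e^{(\lambda+2\eps)t}$, so you must choose $\eps$ with $\lambda+2\eps<0$ (and $\eps<c_1$), not merely $\lambda+\eps<0$. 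Also make explicit that the bound $\beta S_{u,v}I_{u,v}\le\beta\hat S_uI_{u,v}\le e^{(\lambda+\eps)s}$ combines the a.s.\ subexponential growth of $\hat S_u$ (from its explicit representation) with the decay rate $\limsup t^{-1}\ln I_{u,v}\le\lambda$ from the first step; with those two points filled in, the proof is complete.
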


\begin{proof}
Let $\hat I_{v}(t)$ be the solution to
\begin{equation*}
d \hat I(t)=\hat I(t)\big(-(\mu+\rho+\gamma)+\beta\hat S_{u}(t)\big)dt+\sigma_2 \hat I(t)dB(t), \;\;\hat I(0)=v.
\end{equation*}
where $\hat S(t)$ is the solution to \eqref{e2.1}.
By comparison theorem, $I_{u,v}(t)\leq\hat I_{v}(t)$ a.s. given that $\hat S(0)=S(0)=u, I(0)=\hat I(0)=v$.
In view of the It\^o formula and the ergodicity of $\hat S_{u}(t)$,
\begin{equation}\label{e2.6}
\begin{aligned}
\limsup\limits_{t\to\infty}\frac1t\ln\hat I_{v}(t)=&\limsup\limits_{t\to\infty}\bigg(\frac1t\int_0^t\Big(-(\mu+\rho+\gamma+\frac{\sigma^2_2}{2})+\beta\hat S_{u}(\tau)\Big)d\tau+\sigma_2\frac{B(t)}t\bigg)\\
=&\frac{\alpha\beta}{\mu}-\big(\mu+\rho+\gamma+\frac{\sigma^2_2}{2}\big)=\lambda<0\ \hbox{ a.s.}
\end{aligned}
\end{equation}
That is, $I_{u,v}(t)$ converges  almost surely to 0 at an exponential rate.

For any $\eps>0$, it follows from  \eqref{e2.6} that  there exists $t_0>0$ such that $\PP(\Omega_\eps)>1-\eps$ where
$$\Omega_\eps:=\Big\{ I_{u,v}(t)\leq \exp\Big\{\dfrac{\lambda t}2\Big\}\;\forall t\geq t_0\Big\}=\Big\{ \ln I_{u,v}(t)\leq \dfrac{\lambda t}2\;\forall t\geq t_0\Big\}.$$
Clearly, we can choose $t_0$ satisfying $-\dfrac{2\beta}{\lambda}\exp\Big\{\dfrac{\lambda t_0}2\Big\}<\eps$.
Let $\hat S_{u}(t), \,t\geq t_0$ be the solution to \eqref{e2.1} given that $\hat S(t_0)= S(t_0)$.
We have from the comparison theorem that $\PP\{S_{u,v}(t)\leq \hat S_{u}(t) \; \forall t\geq t_0\}=1$.
In view of the It\^o formula, for almost all $\omega\in\Omega_\eps$ we have
$$
\begin{aligned}
0\leq \ln \hat S_{u}(t)-\ln S_{u,v}(t)=&\alpha\int_{t_0}^t\Big[\dfrac1{\hat S_{u}(\tau)}-\dfrac1{ S_{u,v}(\tau)}\Big]d\tau+\beta\int_{t_0}^tI_{u,v}(\tau)d\tau.\\
\leq&\beta\int_{t_0}^t\exp\Big\{\dfrac{\lambda\tau}2\Big\}d\tau= -\dfrac{2\beta}{\lambda}\Big(\exp\Big\{\dfrac{\lambda t_0}2\Big\}-\exp\Big\{\dfrac{\lambda t}2\Big\}\Big)<\eps.
\end{aligned}
$$
As a result,
\begin{equation}\label{e2.8}
\PP\{|\ln S_{u,v}(t)-\ln \hat S_{u}(t))|>\eps\}\leq 1-\PP(\Omega_\eps)<\eps \;\forall t\geq t_0.
\end{equation}
Let $\nu^*$ be the distribution of a random variable $\ln X$ provided that $X$ admits $\mu^*$ as its distribution.
In lieu of proving that the distribution of $S(t)$ converges weakly to
 $\mu^*$, we
 claim an equivalent statement that
the distribution of $\ln S(t)$ converges weakly to
 $\nu^*$.
By the Portmanteau theorem (see \cite[Theorem 1]{BP}), it is sufficient to prove that for any
$g(\cdot):\R\mapsto\R$ satisfying $|g(x)-g(y)|\leq|x-y|$ and $|g(x)|<1\;\forall x,y\in\R$, we have
$$\E g(\ln  S_{u}(t))\to \bar g:=\int_{\R}g(x)\nu^*(dx)=\int_{0}^\infty g(\ln x)\mu^*(dx).$$

Since the diffusion given by \eqref{e2.1} is non-degenerate, it is well known that
the distribution of $\hat S_{u}(t)$ weakly converges to $\mu^*$ as $t\to\infty$ (see e.g.,  \cite{IK}).
Thus
\begin{equation}\label{e2.9}
\lim\limits_{t\to\infty}\E g(\ln \hat S_{u}(t))=\bar g.
\end{equation}
Note that
\begin{equation}
\label{e2.10} \barray
\disp |\E g(\ln S_{u,v}(t))-\bar g|\ad \leq |\E g(\ln S_{u,v}(t))-\E g(\ln \hat S_{u}(t))|+|\E g(\ln \hat S_{u}(t))-\bar g|\\
\ad \leq \eps\PP\{|\ln S_{u,v}(t)-\ln \hat S_{u}(t)|\leq\eps\}+2\PP\{|\ln S_{u,v}(t)-\ln \hat S_{u}(t)|>\eps\}\\
\aad \qquad +|\E g(\ln \hat S_{u}(t))-\bar g|.
\earray\end{equation}
Applying \eqref{e2.8} and \eqref{e2.9} to \eqref{e2.10} yields
$$\limsup\limits_{t\to\infty}|\E g(\ln S_{u,v}(t))-\bar g|\leq3\eps.$$
Since $\eps$ is taken arbitrarily, we obtain the desired conclusion.
The proof is complete.
\end{proof}

\subsection{Case 2: $\lambda>0$}
We now focus on the case $\lambda>0$.
Let $P(t, (u,v),\cdot)$ be the transition probability of $(S_{u,v}(t), I_{u,v}(t))$.
To obtain properties of $P(t, (u,v),\cdot)$, we first rewrite equation \eqref{e1.1} in Stratonovich's form
\begin{equation}\label{e2.11ss}
\begin{cases}
dS(t)=[\alpha-c_1S(t) -\beta S(t)I(t)]dt+\sigma_1S(t)\circ dB(t),\\
dI(t)=[-c_2I(t)+\beta S(t)I(t)]dt+\sigma_2 I(t)\circ dB(t).
\end{cases}
\end{equation}
where $c_1=\mu+\frac{\sigma_1^2}{2}, c_2=\mu+\rho+\gamma+\frac{\sigma_2^2}2.$
Put
$$A(x, y)=\left(\begin{array}{l}\alpha-c_1x -\beta xy\\
\hspace{.1cm}-c_2y+\beta xy
\end{array}\right)\,\mbox{ and }\,
B(x, y)=\left(\begin{array}{l}\sigma_1x\\
\sigma_2y
\end{array}\right).$$
To proceed, we first
 recall the notion of Lie bracket.
If $\Phi(x,y)=(\Phi_1, \Phi_2)^\top$ and $\Psi(x,y)=(\Psi_1, \Psi_2)^\top$ are vector fields on $\R^2$ then the Lie bracket $[\Phi,\Psi]$ is a vector field given by
$$[\Phi,\Psi]_j(x,y)=\Big(\Phi_1 \frac{\partial \Psi_j}{\partial x}(x,y)-\Psi_1 \frac{\partial \Phi_j}{\partial x}(x,y)\Big)+\Big(\Phi_2 \dfrac{\partial \Psi_j}{\partial y}(x,y)-\Psi_2 \frac{\partial \Phi_j}{\partial y}(x,y)\Big), \; j=1,2.$$
Denote by $\mathcal L(x, y)$ the Lie algebra generated by $A(x, y), B(x, y)$ and $\mathcal L_0(x, y)$ the ideal in $\mathcal L(x, y)$ generated by $B$. We have the following lemma.

\begin{lem}\label{asp3.1} For $\sigma_1>0, \sigma_2\ne 0$, the H\"{o}rmader condition holds for the diffusion \eqref{e2.11ss}.
To be more precise, we have $\dim\mathcal L_0(x, y)=2$ at every $(x, y)\in\R^{2,\circ}_+$ or equivalently, the set of vectors $ B, [A, B], [A, [A, B]], [B, [A, B]],\dots$ spans $\R^2$ at every $(x, y)\in\R_+^{2,\circ}$. As a result, the transition probability $P(t, (u,v),\cdot)$ has smooth density
$p(t, u,v, u', v')$.
\end{lem}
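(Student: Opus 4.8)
\noindent\emph{Proof plan.} The idea is to exhibit explicitly, at every point of $\R^{2,\circ}_+$, two linearly independent vectors lying in $\mathcal L_0(x,y)$, and then to appeal to H\"{o}rmander's hypoellipticity theorem for the smoothness of the kernel. Since $\mathcal L_0(x,y)$ is the ideal generated by $B$, it contains $B$ together with the iterated brackets $E_1:=[A,B]$, $E_2:=[B,E_1]=[B,[A,B]]$ and $E_3:=[B,E_2]=[B,[B,[A,B]]]$; as $\R^2$ is two dimensional, it therefore suffices to show that at each $(x,y)\in\R^{2,\circ}_+$ at least one of the pairs $\{B,E_1\}$, $\{B,E_2\}$, $\{B,E_3\}$ is a basis of $\R^2$. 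The first step is to compute these brackets from the definition recalled above. A direct calculation gives $[A,B]=\bigl(\sigma_1\alpha+\beta\sigma_2xy,\ -\sigma_1\beta xy\bigr)^\top$, and then an easy induction on $k$ (the step $E_{k+1}=[B,E_k]$ being a one-line computation in which the $\sigma_1\sigma_2^{k}\beta xy$ cross terms cancel) shows that
$$E_k(x,y)=\Bigl((-1)^{k+1}\sigma_1^{k}\alpha+\beta\sigma_2^{k}xy,\ \ -\sigma_1^{k}\beta xy\Bigr)^\top,\qquad k\ge1.$$

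The second step is a $2\times2$ determinant computation. From the formula for $E_k$ one obtains
$$\det\bigl(B(x,y),E_k(x,y)\bigr)=-y\Bigl(\sigma_1^{k+1}\beta x^2+(-1)^{k+1}\sigma_1^{k}\sigma_2\alpha+\beta\sigma_2^{k+1}xy\Bigr).$$
Writing $u:=\sigma_1\beta x^2$, $v:=\sigma_2\alpha$, $w:=\beta\sigma_2xy$ and $r:=\sigma_2/\sigma_1$, this determinant equals $-\sigma_1^{k}y\,(u+(-1)^{k+1}v+r^kw)$, and on $\R^{2,\circ}_+$ we have $u>0$, $v\ne0$, $w\ne0$, $r\ne0$, since $\alpha,\beta,x,y>0$, $\sigma_1>0$ and $\sigma_2\ne0$. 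Suppose, for contradiction, that at some point all three pairs $\{B,E_1\}$, $\{B,E_2\}$, $\{B,E_3\}$ were linearly dependent; equivalently $u+v+rw=0$, $u-v+r^2w=0$ and $u+v+r^3w=0$. Subtracting the first relation from the third gives $(r^3-r)w=0$, hence $r=\pm1$ because $w,r\ne0$. If $r=1$ the first two relations force $v=0$, whereas if $r=-1$ they force $u=0$; both contradict what was just noted. Thus the three determinants cannot vanish simultaneously, so at every $(x,y)\in\R^{2,\circ}_+$ one of the pairs $\{B,E_j\}$ is a basis of $\R^2$, i.e. $\dim\mathcal L_0(x,y)=2$, which is exactly the H\"{o}rmander condition asserted for \eqref{e2.11ss}.

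For the last assertion I would invoke H\"{o}rmander's hypoellipticity theorem: since $\dim\mathcal L_0(x,y)=2$ at every point of the open state space $\R^{2,\circ}_+$, the transition semigroup of \eqref{e2.11ss} is smoothing, so $P(t,(u,v),\cdot)$ admits a density $p(t,u,v,u',v')$ that is $C^\infty$ jointly in $t>0$ and in $(u,v),(u',v')\in\R^{2,\circ}_+$; see e.g. \cite{IK} and the references therein. I expect the only delicate part to be the bracket bookkeeping and the choice of which brackets to use: the computations are elementary but sign sensitive, and the argument genuinely has to be pushed as far as $E_3=[B,[B,[A,B]]]$, because the cruder statement---that $\{B,[A,B]\}$ alone spans $\R^2$ everywhere---holds when $\sigma_2>0$ but fails along a curve in $\R^{2,\circ}_+$ when $\sigma_2<0$, precisely the regime this paper is designed to handle.
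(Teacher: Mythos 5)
Your proof is correct and follows essentially the same route as the paper: the brackets $E_1,E_2,E_3$ you compute are exactly the paper's $D=[A,C]$, $E=[C,D]$, $F=[C,E]$ up to the normalization $C=B/\sigma_1$, and both arguments conclude by showing that suitable $2\times 2$ determinants from this family cannot vanish simultaneously. The only (welcome) difference is organizational: your three-relation linear system in $u,v,w$ treats $\sigma_2>0$ and $\sigma_2<0$ uniformly, whereas the paper argues only the case $r=-\sigma_2/\sigma_1>0$ and cites \cite{LJX} for the other sign.
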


\begin{proof}
This lemma has been proved in \cite{LJX} for the case $\sigma_2>0$.
Assume that $r=-\dfrac{\sigma_2}{\sigma_1}>0$.
It is easy to obtain
\begin{align*}
  C:=&\dfrac1{\sigma_1}B(x,y)=\begin{pmatrix}
    x\\ - ry
  \end{pmatrix},
\\
  D:=&[A, C](x,y)=\begin{pmatrix}
    \alpha -r\beta xy\\ -\beta xy
  \end{pmatrix},
\\
E:=&[C,D](x,y)=\begin{pmatrix}
   -\alpha +r^2\beta xy\\ -\beta xy
  \end{pmatrix},
\\
  F:=&[C, E](x,y)=\begin{pmatrix}
  \alpha-r^3\beta xy\\ -\beta xy
  \end{pmatrix}.
\end{align*}
Since $\det(D,F)=0$ only if $r^2=1$ or $r=1$ (since $r>0$).
When $r=1$, solving $\det(D, E)=0$ obtains
$\beta xy=\alpha$
which implies
$$\det(C,D)=\left|\begin{array}{cc}x&0\\-y &-\alpha\end{array}\right|\ne0.$$
As a result, $B, D, E, F$  span $\R^2$ for all $(x, y)\in\R_+^{2,\circ}$. The lemma is proved.
\end{proof}
In order to describe the support of the invariant measure $\pi^*$ (if it exists) and to prove the ergodicity of \eqref{e1.1}, we need to investigate the following control system on $\R^{2,\circ}$
\begin{equation}\label{e3.2}
\begin{cases}
   \dot u_{\phi}(t)=\sigma_1 u_{\phi}(t)\phi(t)+\alpha -\beta u_{\phi}(t)v_{\phi}(t)-c_1 u_{\phi}(t),\\
\dot v_{\phi}(t)=\sigma_2 v_{\phi}(t)\phi(t)+\beta u_{\phi}(t)v_{\phi}(t)-c_2v_\phi(t),
\end{cases}
\end{equation}
where  $\phi$ is taken from the set of piecewise continuous real-valued functions defined on $\R_+$.
Let $(u_\phi(t, u,v),$ $ v_\phi(t, u, v))$ be the solution to equation \eqref{e3.2} with control $\phi$ and initial value $(u,v)$.
Denote by ${\cal O}^+(u, v)$ the reachable set from $(u, v)\in\R^{2,\circ}_+$, that is the set of $(u', v')\in\R^{2,\circ}_+$ such that there exists a $t\geq0$ and a control $\phi(\cdot)$ satisfying
$u_\phi(t, u, v)=u', v_\phi(t, u, v)=v'$.
We now recall some concepts introduced in  \cite{WK}.
Let $X$ be a subset of $\R^{2,\circ}_+$ satisfying the property that for any $w_1, w_2\in X$,
$w_2\in \bar{{\cal O}^+(w_1)}$.
Then there is a unique maximal set $Y\supset X$ such that this property still holds for $Y$. Such a $Y$ is called a control set.
A control set $W$ is said to be invariant if $\bar{{\cal O}^+(w)}\subset\bar W$ for all $w\in W$.

Putting $r:=\frac{-\sigma_2}{\sigma_1}$ and $z_\phi(t)=u^r_\phi(t)v_\phi(t)$, we have an equivalent system
\begin{equation}\label{e3.3}
\left\{\begin{array}{l}\dot u_\phi(t)=\sigma_1\phi(t)u_\phi(t)+g(u_\phi(t),z_\phi(t)),\\
\dot z_\phi(t)=h(u_\phi(t), z_\phi(t)),
\end{array}\right.
\end{equation}
where
$$g(u,z )=-c_1e^u+\alpha -\beta zu^{1-r},$$
and
\begin{align*}
h(u, z)=u^{-r}z\Big[-(c_1r+c_2)u^r+\beta u^{1+r}+\alpha ru^{r-1}-\beta r z\Big].
\end{align*}

\begin{lem} \label{lem2.3}
For the control system \eqref{e3.3}, the following claims hold
  \begin{enumerate}
    \item For any $u_0, u_1, z_0>0$ and $\eps>0$, there exists a control $\phi$ and $T>0$ such that
$u_\phi(T, u_0, z_0)=u_1$, $|z_\phi(T, u_0, z_0)-z_0|<\eps$.
    \item For any $0<z_0<z_1$, there is a $u_0>0$, a control $\phi$, and $T>0$ such that $z_\phi(T, u_0, z_0)=z_1$ and that $u_\phi(t, u_0, z_0)=u_0 \;\forall\,0\leq t\leq T$.
    \item Let $d^*=\inf\limits_{u>0}\{-(c_1r+c_2)u^{r}+\beta u^{1+r}+\alpha ru^{r-1}\}.$
        \begin{enumerate}
          \item If $d^*\leq 0$ then for any $z_0>z_1$, there is  $u_0>0$, a control $\phi$, and $T>0$ such that $z_\phi(T, u_0, z_0)=z_1$ and that $u_\phi(t, u_0, z_0)=u_0 \;\forall\,0\leq t\leq T$.
          \item Suppose that $d^*>0$ and $z_0>c^*:=\dfrac{d^*}{\beta r}$. If $c^*<z_1<z_0$, there is $u_0>0$ and  a control $\phi$ and $T>0$ such that $z_\phi(T, u_0, z_0)=z_1$ and that $u_\phi(t, u_0, z_0)=u_0 \;\forall\, 0\leq t\leq T$. However,  there is no control $\phi$ and $T>0$ such that
$z_\phi(T, u_0, z_0)<c^*$.
        \end{enumerate}
        \end{enumerate}
\end{lem}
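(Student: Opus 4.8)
The plan is to deduce all four assertions from the structure of the reduced system \eqref{e3.3}. The decisive feature, already manifest there, is that the control $\phi$ does not appear in the $\dot z$-equation at all (this is exactly the effect of the relation $r\sigma_1+\sigma_2=0$): once a path $t\mapsto u_\phi(t)$ has been produced, the coordinate $z_\phi$ evolves autonomously according to $\dot z=h(u_\phi(t),z)$. In particular, if one can hold $u$ at a constant level $u_0$, then $z_\phi$ obeys the scalar autonomous equation $\dot z=h(u_0,z)=u_0^{-r}z\bigl(\psi(u_0)-\beta r z\bigr)$, where $\psi(u):=-(c_1r+c_2)u^r+\beta u^{1+r}+\alpha ru^{r-1}$; since $z>0$ and $u_0^{-r}>0$, the sign of $\dot z$ along such a trajectory is the sign of $\psi(u_0)-\beta r z$. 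Holding $u\equiv u_0$ is itself achievable: the feedback choice $\phi(t):=-g(u_0,z_\phi(t))/(\sigma_1u_0)$ makes $\dot u_\phi\equiv0$, and since $z_\phi$ is then continuous this $\phi$ is an admissible (continuous, hence piecewise continuous) control, which we extend by $0$ beyond the finite time of interest. Finally, $\psi$ is continuous on $(0,\infty)$ with $\inf_{u>0}\psi(u)=d^*$ and $\sup_{u>0}\psi(u)=+\infty$ (the term $\beta u^{1+r}$ dominates as $u\to\infty$), so $\psi$ attains every value in $(d^*,\infty)$.

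For claim (1) I would not freeze $u$ but rather drive it across quickly. Assume $u_1>u_0$ (the case $u_1<u_0$ is symmetric and $u_1=u_0$ is settled by running the feedback above for a short time). Put $Q:=[u_0,u_1]\times[z_0/2,2z_0]$, a compact subset of $\R^{2,\circ}_+$, and set $K:=\sup_Q|h|<\infty$. Apply the constant control $\phi\equiv c$ with $c>0$ large; as long as the trajectory stays in $Q$ one has $\dot u_\phi\ge\sigma_1cu_0-\sup_Q|g|>0$, so $u_\phi$ increases monotonically and reaches $u_1$ at a time $T_c\le(u_1-u_0)/(\sigma_1cu_0-\sup_Q|g|)$, which tends to $0$ as $c\to\infty$; meanwhile $|z_\phi(t)-z_0|\le Kt\le KT_c$ on $[0,T_c]$. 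Choosing $c$ so large that $KT_c<\min(\eps,z_0/2)$ makes the assumption ``trajectory stays in $Q$'' self-consistent (a standard bootstrap) and yields $|z_\phi(T_c)-z_0|<\eps$; take $T=T_c$.

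For claims (2), (3a) and the existence part of (3b) I use the freezing reduction with a suitable $u_0$. Choose $u_0>0$ so that $\psi(u_0)$ lies on the required side of $\beta r z_1$: for (2) pick $\psi(u_0)>\beta r z_1$ (possible since $\sup\psi=+\infty$); for (3a) pick $\psi(u_0)<\beta r z_1$ (possible since $\beta r z_1>0\ge d^*=\inf\psi$); for (3b) pick $\psi(u_0)<\beta r z_1$ (possible since $z_1>c^*$ gives $\beta r z_1>d^*=\inf\psi$). Running the feedback control that freezes $u\equiv u_0$, the scalar equation $\dot z=u_0^{-r}z\bigl(\psi(u_0)-\beta r z\bigr)$ then has $\dot z$ of one constant sign on the closed interval between $z_0$ and $z_1$, and $|\dot z|$ is bounded below there by a positive constant because $z$ stays away from $0$ and $\psi(u_0)-\beta r z$ stays away from $0$. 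Hence the monotone solution from $z_0$ reaches $z_1$ in a finite time $T$, with $u_\phi\equiv u_0$ on $[0,T]$, as required.

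The impossibility statement in (3b) is the real point, and is where I expect the only genuine subtlety. Using again that $\phi$ is absent from the $\dot z$-equation, together with $\psi(u)\ge d^*=\beta r c^*$ for every $u>0$, one sees that on the line $\{z=c^*\}$ every trajectory satisfies $\dot z=h(u_\phi,c^*)=u_\phi^{-r}c^*\bigl(\psi(u_\phi)-d^*\bigr)\ge0$; hence the region $\{z\ge c^*\}$ is positively invariant for the whole control family. Concretely, if some admissible $z_\phi(\cdot)$ with $z_\phi(0)=z_0>c^*$ had $z_\phi(t_1)<c^*$, then at the last time $t_0\in[0,t_1)$ with $z_\phi(t_0)=c^*$ we would have $z_\phi(t)<c^*$, hence $\beta r z_\phi(t)<d^*\le\psi(u_\phi(t))$, hence $\dot z_\phi(t)=u_\phi(t)^{-r}z_\phi(t)\bigl(\psi(u_\phi(t))-\beta r z_\phi(t)\bigr)>0$, for all $t\in(t_0,t_1]$, contradicting $z_\phi(t_1)<z_\phi(t_0)$. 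So no control drives $z$ below $c^*$. The main obstacle is thus conceptual rather than computational: recognizing that the degeneracy decouples $\phi$ from the $z$-equation and reduces everything to the scalar family $\dot z=h(u_0,z)$ and the elementary shape of $\psi$; the remaining items (admissibility of the feedback, global existence of the control trajectories, and the compactness estimate in (1)) are routine.
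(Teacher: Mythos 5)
Your proof follows essentially the same route as the paper's: claim~1 by a large constant control acting over a vanishingly short time (with the same bootstrap on a compact box), claims~2--3 by freezing $u$ at a well-chosen level $u_0$ through feedback and reading off the sign of $h(u_0,z)=u_0^{-r}z\bigl(\psi(u_0)-\beta r z\bigr)$, and the impossibility in 3(b) from $h(u,c^*)\ge 0$ --- where your last-crossing invariance argument for $\{z\ge c^*\}$ is in fact more carefully spelled out than the paper's one-line remark. The only slip is in the subcase $r<0$ (i.e.\ $\sigma_2>0$): there $\beta r z<0$, so the threshold to beat on $[z_1,z_0]$ is $\beta r z_0$ rather than $\beta r z_1$, and your assertion $\sup_{u>0}\psi(u)=+\infty$ fails when $r\le -1$ (then $\psi(u)\to 0^+$, resp.\ $\to\beta$, as $u\to\infty$); nevertheless, since $\sup_{u>0}\psi(u)>0$ always and $d^*=-\infty$ whenever $r<0$, the required choices of $u_0$ still exist and the argument goes through after adjusting these inequalities.
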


\begin{proof}
Suppose that $u_0<u_1$ and let $\rho_1=\sup\{|g(u, z)|, |h(u, z)|: u_0\leq u\leq u_1, |z-z_0|\leq\eps\}.$ We choose  $\phi(t)\equiv\rho_2$ with $\Big(\dfrac{\sigma_1\rho_2u_0}{\rho_1}-1\Big)\eps\geq u_1-u_0$. It is easy to check that with this control, there is $0\leq T\leq\frac{\eps}{\rho_1}$ such that $u_\phi(T, u_0, z_0)=u_1$, $|z_\phi(T, u_0, z_0)-z_0|<\eps$.
If $u_0>u_1$, we can construct $\phi(t)$ similarly. Then the
claim 1 is proved.

By choosing $u_0$
to be sufficiently
large, there is a $\rho_3>0$ such that
$h(u_0, z)>\rho_3\;\forall z_0\leq z\leq z_1$. This property, combining with \eqref{e3.3}, implies the existence of a feedback control $\phi$ and $T>0$ satisfying that $z_\phi(T, u_0, z_0)=z_1$ and that $u_\phi(t, u_0, z_0)=u_0,\;\forall\, 0\leq t\leq T$.

We now prove
 claim 3. If $r<0$ then $$\lim\limits_{u\to0}\big[-(c_1r+c_2)u^{r}+\beta u^{1+r}+\alpha ru^{r-1}]=-\infty$$ and
 $$\lim\limits_{u\to0}\big[-(c_1r+c_2)u^{r}+\beta u^{1+r}+\alpha ru^{r-1}]=0 \ \hbox{ if } \ r>1.$$ As a result, $d^*\leq0$ if $r\notin(0,1]$ which implies that for any $z_0>z_1$, we choose $u_0$ such that  $\sup_{z\in[z_1,z_0]}h(u_0, z)<0$, which implies that there is a feedback control $\phi$ and $T>0$ satisfying $z_\phi(T, u_0, z_0)=z_1$ and  $u_\phi(t, u_0, z_0)=u_0 \;\forall 0\leq t\leq T$.

 If $r\in(0,1]$  there exists $u_0$ such that $-(c_1r+c_2)u_0^{r}+\beta u_0^{1+r}+\alpha ru_0^{r-1}=d^*$. If $d^*\leq 0$, then for any $ z_0>z_1>0$ we have $\sup_{z\in[z_1,z_0]}h(u_0, z)\leq u_0^{-r}\sup_{z\in[z_1,z_0]}\{-\beta r z^2\}<0$ which implies the desired claim.
\par Consider the remaining case when $r\in(0,1]$ and $d^*>0$.  First, assume $c^*<z_1<z_0$. Let $u_0$ satisfy
$-(c_1r+c_2)u_0^{r}+\beta u_0^{1+r}+\alpha r{u_0}e^{r-1}=d^*=\beta rc^*$. Hence
$$
\begin{aligned}
\sup\limits_{z\in[z_1,z_0]}\{h(u_0, z)\}=&u_0^{-r}\sup\limits_{z\in[z_1,z_0]}\Big\{z\Big(-(c_1r+c_2)u_0^{r}+\beta u_0^{1+r}+\alpha ru_0^{r-1}-\beta r {z}\Big)\Big\}\\
=&-\beta ru_0^{-r}z_1(c^*-{z_1})<0.
\end{aligned}
$$
Thus, there is a feedback control $\phi$ and $T>0$ satisfying $z_\phi(T, u_0, z_0)=z_1$ and  $u_\phi(t, u_0, z_0)=u_0 \;\forall 0\leq t\leq T$.
The final assertion follows from the fact that $h(u,c^*)\geq0$ for all $u\in\R$.
\end{proof}

To obtain the convergence in total variation norm and to estimate the convergence rate,
we aim to  apply \cite[Theorem 3.6, p. 235]{JR}. In order to do that, we construct a  function $V:\R^{2,\circ}_+\to[1,\infty)$ satisfying that
$$\E V\big(S_{u,v}(t^*), I_{u,v}(t^*)\big)\leq V(u,v)-\kappa_1 V^\gamma(u,v)+\kappa_2\1_{\{(u,v)\in K\}}$$
for some petite set $K$ and some $\gamma\in(0,1), \kappa_1,\kappa_2>0,, t^*>1$.
Recall that a set $K$ is said to be petite with respect to the Markov chain $S_{u,v}(nt^*), I_{u,v}(nt^*),n\in\N$ if
 there exists a measure $\psi$ with $\psi(\R^{2,\circ}_+)>0$ and a probability distribution $\nu(\cdot)$  concentrated on $\N$ such that   $$
    \K(u,v,Q):=\sum_{n=1}^\infty P(nt^*,u,v,Q)\nu(n)\geq \psi(Q)\;  \forall (u,v)\in K,\; Q\in\cB(\R_+^{2,\circ}).
  $$
We also have to prove that the skeleton Markov chain
$\big(S_{u,v}(nt^*), I_{u,v}(nt^*)\big), n\in\N$ is irreducible and aperiodic.
We refer to \cite{EN} or \cite{MT} for the definitions and properties of irreducibility, aperiodicity, as well as petite sets.
The estimation of the convergence rate is divided into some lemmas and propositions.

\begin{lem}\label{lem2.1} For any  $0<p^*<\min\{\frac{2\mu}{\sigma_1^2},\frac{2(\mu+\rho+\gamma)}{\sigma_2^2}\}$. Let $U(u,v)=(u+v)^{1+p^*}+u^{-\frac{p^*}2}.$ There exists positive constants $K_1, K_2$ such that
\begin{equation}
   e^{K_1t}\E(U(S_{u,v}(t), I_{u,v}(t))) \leq U(u,v)
  +\frac{K_2(e^{K_1t}-1)}{K_1}.
\end{equation}

\end{lem}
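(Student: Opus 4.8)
The plan is to establish a Lyapunov-type drift inequality $\op U(u,v)\le K_2-K_1U(u,v)$ valid for all $(u,v)\in\R^{2,\circ}_+$, where $\op$ is the differential operator (generator) associated with the system \eqref{e1.1}, and then to convert it into the stated integrated bound by applying It\^o's formula to $e^{K_1t}U(S_{u,v}(t),I_{u,v}(t))$ together with a standard localization. I would split $U=U_1+U_2$ with $U_1(u,v)=(u+v)^{1+p^*}$ and $U_2(u,v)=u^{-p^*/2}$, and estimate $\op U_1$ and $\op U_2$ separately.

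For $U_1$, put $x=u+v$. In $\op U_1$ the terms carrying $\beta uv$ cancel, and grouping the remaining terms by degree of homogeneity one obtains
$$\op U_1=(1+p^*)\alpha x^{p^*}+(1+p^*)x^{p^*-1}Q(u,v),$$
where
$$Q(u,v)=\Big(\frac{p^*\sigma_1^2}{2}-\mu\Big)u^2+\big(p^*\sigma_1\sigma_2-(2\mu+\rho+\gamma)\big)uv+\Big(\frac{p^*\sigma_2^2}{2}-(\mu+\rho+\gamma)\Big)v^2$$
is obtained by combining the drift part $-x(\mu u+(\mu+\rho+\gamma)v)$ with the diffusion contribution $\frac{p^*}{2}(\sigma_1u+\sigma_2v)^2$. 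This is exactly where the hypothesis enters: $p^*<\frac{2\mu}{\sigma_1^2}$ and $p^*<\frac{2(\mu+\rho+\gamma)}{\sigma_2^2}$ make the two diagonal coefficients of $Q$ strictly negative, and the off-diagonal coefficient $p^*\sigma_1\sigma_2-(2\mu+\rho+\gamma)$ is $\le0$ as well (trivially if $\sigma_2<0$, and, if $\sigma_2>0$, by AM--GM since $p^*\sigma_1\sigma_2\le\sqrt{(p^*\sigma_1^2)(p^*\sigma_2^2)}<\sqrt{2\mu\cdot2(\mu+\rho+\gamma)}\le2\mu+\rho+\gamma$). Hence, on $\R^{2,\circ}_+$, $Q(u,v)\le-\delta(u^2+v^2)\le-\frac{\delta}{2}x^2$ for some $\delta>0$, so $\op U_1\le(1+p^*)\alpha x^{p^*}-\frac{\delta}{2}(1+p^*)x^{p^*+1}$.

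For $U_2$, a direct computation gives $\op U_2=-\frac{p^*\alpha}{2}u^{-p^*/2-1}+\frac{p^*\beta}{2}\,v\,u^{-p^*/2}+\theta\,u^{-p^*/2}$ with $\theta=\frac{p^*}{2}\mu+\frac{p^*}{4}\big(\frac{p^*}{2}+1\big)\sigma_1^2>0$. The only genuinely singular term is $\frac{p^*\beta}{2}\,v\,u^{-p^*/2}$, which I would handle with Young's inequality in the form $v\,u^{-p^*/2}\le\eta\,v^{1+p^*}+C(\eta)\,u^{-(1+p^*)/2}$: for $\eta$ small the $v^{1+p^*}$ part is absorbed into the term $-\frac{\delta}{2}(1+p^*)x^{p^*+1}$ coming from $\op U_1$ (as $x^{p^*+1}\ge v^{p^*+1}$), while $C(\eta)\,u^{-(1+p^*)/2}$ and $\theta\,u^{-p^*/2}$ are absorbed into $-\frac{p^*\alpha}{2}u^{-p^*/2-1}$ by a further application of Young, since the exponents $\frac{1+p^*}{2}$ and $\frac{p^*}{2}$ are both strictly smaller than $\frac{p^*}{2}+1$; the remainders of all these uses of Young's inequality, and of $(1+p^*)\alpha x^{p^*}$ estimated against $x^{p^*+1}$, are bounded. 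Collecting everything, $\op U\le-\frac{\delta}{4}(1+p^*)(u+v)^{p^*+1}-\frac{p^*\alpha}{4}u^{-p^*/2-1}+C_0$ for a constant $C_0$, and since $K_1u^{-p^*/2}-\frac{p^*\alpha}{4}u^{-p^*/2-1}$ is bounded above, this yields $\op U\le K_2-K_1U$ on $\R^{2,\circ}_+$ for $K_1=\frac{\delta}{4}(1+p^*)$ (reduced if necessary) and a suitable $K_2>0$.

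Finally, It\^o's formula gives $d\big(e^{K_1t}U(S_{u,v}(t),I_{u,v}(t))\big)=e^{K_1t}\big(K_1U+\op U\big)\,dt+dM_t\le K_2e^{K_1t}\,dt+dM_t$ for a local martingale $M_t$. Stopping at $\tau_n=\inf\{t\ge0:S_{u,v}(t)+I_{u,v}(t)+1/S_{u,v}(t)\ge n\}$ and taking expectations removes the martingale, giving $\E\big[e^{K_1(t\wedge\tau_n)}U(S_{u,v}(t\wedge\tau_n),I_{u,v}(t\wedge\tau_n))\big]\le U(u,v)+K_2\int_0^te^{K_1s}\,ds$. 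Since the solution is non-explosive in $\R^{2,\circ}_+$, we have $\tau_n\uparrow\infty$ a.s., and letting $n\to\infty$ with Fatou's lemma (using $U\ge0$) yields $e^{K_1t}\E U(S_{u,v}(t),I_{u,v}(t))\le U(u,v)+\frac{K_2(e^{K_1t}-1)}{K_1}$, which is the claim. The step I expect to be the main obstacle is the estimate of $\op U_1$: checking that $Q$ is negative definite on the positive quadrant---in particular fixing the sign of its cross-term when $\sigma_2>0$, which is precisely where the sharp form of the hypothesis on $p^*$ is needed (through AM--GM)---and then, for $\op U_2$, choosing the Young exponents so that the singular term $v\,u^{-p^*/2}$ is exactly matched by the available negative contributions.
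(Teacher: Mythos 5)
Your proposal is correct and follows essentially the same route as the paper: compute the generator on $U=(u+v)^{1+p^*}+u^{-p^*/2}$, use the hypothesis on $p^*$ to make the quadratic form in the $(u+v)^{p^*-1}$ term negative on the positive quadrant, absorb the singular cross term $vu^{-p^*/2}$ by Young's inequality into the available negative terms to get $LU\le K_2-K_1U$, and then integrate via It\^o's formula with localization and Fatou. The only differences are cosmetic (a different but equally valid choice of Young exponents, and a more explicit verification of the sign of the cross term $p^*\sigma_1\sigma_2-(2\mu+\rho+\gamma)$, which the paper leaves implicit).
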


\begin{proof}
Consider the Lyapunov function
$U(u,v)=(u+v)^{1+p^*}+u^{-\frac{p^*}2}.$ By directly calculating the differential operator $LU(u,v)$ associated with equation \eqref{e1.1}, we have
\begin{align}\label{e2.16}
  LU(u, v)=&
  (1+p)(u+v)^{p^*}(\alpha-\mu u-(\mu+\rho+\gamma)v)
+\frac{(1+p^*)p^*}{2}(u+v)^{p-1}(\sigma_1 u+\sigma_2v)^2
\notag\\&-\frac{p^*}2u^{-\frac{p^*}2-1}(\alpha-\beta uv-\mu u)+\frac{p^*(2+p^*)}8\sigma_1^2 u^{-\frac{p^*}2}
\notag\\
=&(1+p^*)\alpha(u+v)^{p^*}
-(1+p^*)(u+v)^{p^*-1}\Big[(\mu-\frac{p^*}2\sigma_1^2) u^2+(\mu+\rho+\gamma-\frac{p^*}2\sigma_2^2)v^2)
\notag\\&
+(2\mu+\rho+\gamma-p^*\sigma_1\sigma_2)uv\Big]-p^*\alpha u^{-\frac{2+p^*}2}+\frac{\beta p^*}{2}  u^{-\frac{p^*}2}v+\frac{p^*}2\Big[\frac{(2+p^*)\sigma_1^2 }4+\mu \Big] u^{-\frac {p^*}2}.
\end{align}
By Young's inequality, we have
\begin{equation}\label{e2.17}
  u^{-\frac {p^*}2}v\leq \frac{3p^*}{4+3p^*}u^{-\frac{4+3p^*}6}+\frac{4}{4+3p^*} v^{\frac{4+3p^*}4}.
\end{equation}
Choose a number $K_1$ satisfying
 $0<K_1<\min\{\mu-\frac{p^*}2\sigma_1^2, \mu+\rho+\gamma-\frac{p^*}2\sigma_2^2\}.$
From \eqref{e2.17}  \eqref{e2.16}, we obtain
$
K_2=\sup_{u, v\in\R_+}\{LU(u,v)+K_1U(u,v)\}<\infty.
$
As a result,
\begin{equation}\label{eU}
LU(u,v)\leq K_2-K_1U(u,v)\;\forall (u,v)\in\R^2_+.
\end{equation}
 For $n\in\N$, define the stopping time
\begin{equation*}
  \eta_n=\inf\{t\geq 0: U(S_{u,v}(t), I_{u,v}(t))\geq n\}.
\end{equation*}
Then It\^{o}'s formula and \eqref{eU} yield that
\begin{equation*}
  \begin{aligned}
  \E( e^{K_1(t\wedge\eta_n)}&U(S_{u,v}(t\wedge\eta_n), I_{u,v}(t\wedge\eta_n)))\\
  &\leq U(u,v)+\E\int_0^{t\wedge\eta_n}e^{K_1\tau}\Big(LU(S_{u,v}(\tau), I_{u,v}(\tau))+K_1 U(S_{u,v}(\tau), I_{u,v}(\tau))\Big)d\tau
  \\
  &\leq U(u,v)+\frac{K_2(e^{K_1t}-1)}{K_1}.
  \end{aligned}
\end{equation*}
  By letting $n\to \infty$ we obtain from Fatou's lemma that
\begin{equation}
  \E e^{K_1t}(U(S_{u,v}(t), I_{u,v}(t))) \leq U(u,v)
  +\frac{K_2(e^{K_1t}-1)}{K_1}.
\end{equation}
The lemma is proved. \end{proof}

\begin{lem}\label{lem2.4}
  There are positive constants $K_3, K_4$ such that, for any $t\geq 1$ and $A\in\F$
  \begin{equation}
    \E \left([\ln I_{u,v}(t)]_-^2\1_{A}\right)\leq [\ln v]_-^2\PP(A)+K_3\sqrt{\PP(A)}t[\ln v]_-+K_4t^2\sqrt{\PP(A)},
  \end{equation}
  where
  $[\ln x]_-=\max\{0, -\ln x\}.$
\end{lem}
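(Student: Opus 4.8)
The plan is to start from the lower bound for $\ln I_{u,v}(t)$ furnished by It\^o's formula. Since the solution remains in $\R^{2,\circ}_+$, we have $S_{u,v}(s)>0$ and hence $\beta S_{u,v}(s)\geq 0$ for all $s\geq 0$ a.s., so that
$$\ln I_{u,v}(t)=\ln v+\int_0^t\Big(\beta S_{u,v}(s)-(\mu+\rho+\gamma)-\frac{\sigma_2^2}{2}\Big)\,ds+\sigma_2 B(t)\geq \ln v-c_2 t+\sigma_2 B(t),$$
with $c_2=\mu+\rho+\gamma+\frac{\sigma_2^2}{2}$. Passing to negative parts, using $\max\{0,a+b+c\}\leq\max\{0,a\}+\max\{0,b\}+\max\{0,c\}$ together with $c_2 t\geq 0$, this yields the pathwise estimate
$$[\ln I_{u,v}(t)]_-\leq [\ln v]_-+c_2 t+|\sigma_2|\,|B(t)|.$$

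Next I would square this, expand the square, multiply by $\1_{A}$, and take expectations. The deterministic part contributes $\big([\ln v]_-^2+2c_2 t[\ln v]_-+c_2^2 t^2\big)\PP(A)$ exactly; in particular the leading term $[\ln v]_-^2\PP(A)$ appears with precisely the coefficient demanded by the statement. The three terms containing $|B(t)|$ are handled by the Cauchy--Schwarz inequality: $\E\big(|B(t)|\1_A\big)\leq \big(\E B(t)^2\big)^{1/2}\PP(A)^{1/2}=\sqrt t\,\sqrt{\PP(A)}$ and $\E\big(B(t)^2\1_A\big)\leq\big(\E B(t)^4\big)^{1/2}\PP(A)^{1/2}=\sqrt3\,t\,\sqrt{\PP(A)}$, where I use the standard Gaussian moments $\E B(t)^2=t$ and $\E B(t)^4=3t^2$. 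Note that Cauchy--Schwarz, rather than any independence, is the right tool here, since $A$ is an arbitrary event and need not be independent of $B(t)$.

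Finally I would assemble the pieces, using $\PP(A)\leq\sqrt{\PP(A)}$ (valid since $\PP(A)\leq 1$) and $t\geq 1$ (so $\sqrt t\leq t\leq t^2$) to put every remaining summand into the shape $K_3\sqrt{\PP(A)}\,t\,[\ln v]_-$ or $K_4 t^2\sqrt{\PP(A)}$: the cross terms $2c_2 t[\ln v]_-\PP(A)$ and $2|\sigma_2|\,[\ln v]_-\sqrt t\,\sqrt{\PP(A)}$ go into the first, and $c_2^2 t^2\PP(A)$, $\sqrt3\,|\sigma_2|^2 t\,\sqrt{\PP(A)}$ and $2c_2|\sigma_2|\,t^{3/2}\sqrt{\PP(A)}$ go into the second; e.g.\ $K_3=2c_2+2|\sigma_2|$ and $K_4=c_2^2+\sqrt3\,|\sigma_2|^2+2c_2|\sigma_2|$ work. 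There is no genuine obstacle in this argument; the only points requiring a little care are retaining the exact coefficient $\PP(A)$ on the $[\ln v]_-^2$ term (rather than weakening it to $\sqrt{\PP(A)}$) and correctly bounding the mixed $B(t)$--$\1_A$ expectations for a general $A$.
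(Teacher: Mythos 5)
Your proof is correct and follows essentially the same route as the paper: drop the nonnegative $\beta S$ term in the It\^o representation of $\ln I_{u,v}(t)$ to get the pathwise bound $[\ln I_{u,v}(t)]_-\leq[\ln v]_-+c_2t+|\sigma_2||B(t)|$, then square, multiply by $\1_A$, and control the mixed terms via Cauchy--Schwarz. If anything you are slightly more careful than the paper, which only records the bound for $\E(|B(t)|\1_A)$ and leaves the $\E(B^2(t)\1_A)$ term implicit.
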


\begin{proof}
  We have
  \begin{align*}
  -\ln I_{u,v}(t)&=-\ln I_{u,v}(0)-\beta\int_0^tS_{u,v}(\tau)d\tau+c_2t+\sigma_2 B(t)
  \\& \leq-\ln v+c_2t+\sigma_2 |B(t)|,
  \end{align*}
  where $c_2=\mu+\rho+\gamma+\frac{\sigma_2^2}2.$ Therefore,
  $$[\ln I_{u,v}(t)]_-\leq [\ln v]_-+c_2t+\sigma_2 |B(t)|.$$
  This implies that
  \begin{align*}
    [\ln I_{u,v}(t)]^2_-\1_A\leq& [\ln v]^2_-\1_A+\big(c^2_2t^2+\sigma^2_2 B^2(t)\big)\1_A+2c_2t[\ln v]_-\1_A
    \\&+2\sigma_2|B(t)|\1_A[\ln v]_-+2c_1t\sigma_2|B(t)|\1_A.
  \end{align*}
By using H\"{o}lder inequality, we obtain
$$\E|B(t)|\1_A\leq\sqrt{\E B^2(t)\PP(A)}\leq \sqrt{t\PP(A)}\leq t\sqrt{\PP(A)}.$$
Taking expectation both sides and using the estimate above, we have
  \begin{equation*}
   \E [\ln I_{u,v}(t)]^2_-\1_A\leq[\ln v]_-^2\PP(A)+K_3t\sqrt{\PP(A)}[\ln v]_-+K_4t^2\sqrt{\PP(A)},
     \end{equation*}
for some positive constants $K_3, K_4.$
\end{proof}

We now, choose $\eps\in(0,1)$ satisfying
\begin{equation}\label{e2.35}
  -\frac{3\lambda}2(1-\eps)+K_3\sqrt{\eps}<-\lambda
\text{ and }
-\frac{3\lambda}4(1-\eps)+2K_3\sqrt{\eps}<-\frac{\lambda}2.
\end{equation}
Choose $H$ so large that
\begin{equation}\label{e2.36}
\beta H-2c_2\geq 2+\lambda; \;\;\exp\left\{-\frac{\beta H-2c_2}{2\sigma_2^2}\right\}<\frac{\eps}2\;\;\text{and}\;\;\exp\left\{-\frac{\lambda(\beta H-c_2)}{4\sigma_2^2}\right\}<\frac{\eps}2.
\end{equation}

\begin{lem}\label{lem2.5}
  For $\eps$ and $H$ chosen as above, there is $\delta\in(0,1)$ and $T^*>1$ such that
  \begin{equation}
    \PP\{\ln v+\frac{3\lambda t}{4}\leq \ln I_{u,v}(t)<0\;\;\forall\, t\in [T^*, 2T^*]\}\geq 1-\eps
  \end{equation}
  for all $u\in[0, H], v\in(0,\delta].$
\end{lem}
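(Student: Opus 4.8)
The plan is to base the whole argument on the It\^o identity
$$\ln I_{u,v}(t)=\ln v+\int_0^t\big(\beta S_{u,v}(\tau)-c_2\big)\,d\tau+\sigma_2 B(t),$$
so that the two-sided bound of the lemma reduces to a two-sided control of $\int_0^t\beta S_{u,v}(\tau)\,d\tau$, uniform over $u\in[0,H]$ and $v\in(0,\delta]$, together with a crude control of $\sigma_2 B(t)$ over the window $[T^*,2T^*]$. The upper estimate will follow at once from the comparison $S_{u,v}(t)\le\hat S_u(t)\le\hat S_H(t)$ (the second inequality being monotonicity of \eqref{e2.1} in the initial datum) and the ergodic limit \eqref{e2.3'} for $\hat S_H$. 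The lower estimate is more delicate because the term $-\beta SI$ depletes $S$; the idea is to first force $I$ to remain uniformly tiny on the \emph{whole} interval $[0,2T^*]$, which then keeps $S_{u,v}$ close to $\hat S_u\ge\hat S_0$, so that $\int_0^t\beta S_{u,v}$ is close to $\int_0^t\beta\hat S_0\approx\frac{\alpha\beta}{\mu}\,t$.

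First I would fix a small $\theta>0$ with $\beta\theta<\lambda/16$ and choose $T^*>1$ so large that, on an event $\Omega_1$ with $\PP(\Omega_1)\ge1-\eps/2$, for every $t\ge T^*$ one has simultaneously
$$\tfrac1t\!\int_0^t\!\hat S_H(\tau)\,d\tau\le\tfrac\alpha\mu+\theta,\qquad \tfrac1t\!\int_0^t\!\hat S_0(\tau)\,d\tau\ge\tfrac\alpha\mu-\theta,\qquad \sigma_2|B(t)|\le\tfrac{\lambda}{8}\,t.$$
This is possible because all three averages converge almost surely (to $\alpha/\mu$, $\alpha/\mu$, and $0$), hence their tail suprema over $t\ge T^*$ tend to $0$ as $T^*\to\infty$. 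On $\Omega_1$, using $S_{u,v}\le\hat S_H$ in the identity, for $t\in[T^*,2T^*]$ we get $\ln I_{u,v}(t)\le\ln v+(\lambda+\beta\theta+\tfrac\lambda8)t\le\ln\delta+2\lambda t\le\ln\delta+4\lambda T^*$, which is negative as soon as $\delta<e^{-4\lambda T^*}$.

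Next, having frozen $T^*$, I would pick constants $C_1,C_2$ and an event $\Omega_2$ with $\PP(\Omega_2)\ge1-\eps/2$ on which $\int_0^{2T^*}\hat S_H(\tau)\,d\tau\le C_1$ and $\sup_{0\le t\le 2T^*}\sigma_2|B(t)|\le C_2$. On $\Omega_2$ the identity (dropping the nonpositive $-c_2t$) gives $\ln I_{u,v}(t)\le\ln v+\beta C_1+C_2$ for all $t\in[0,2T^*]$, hence $I_{u,v}(t)\le\delta\,e^{\beta C_1+C_2}$ there and $\beta\int_0^{2T^*}I_{u,v}(\tau)\,d\tau\le\eta(\delta):=2T^*\beta\,e^{\beta C_1+C_2}\delta\to0$ as $\delta\to0$. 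Feeding this into the logarithmic comparison already used in the proof of Theorem \ref{thm2.1},
$$0\le\ln\hat S_u(t)-\ln S_{u,v}(t)=\alpha\!\int_0^t\!\Big(\tfrac1{\hat S_u(\tau)}-\tfrac1{S_{u,v}(\tau)}\Big)d\tau+\beta\!\int_0^t\!I_{u,v}(\tau)\,d\tau\le\eta(\delta),$$
yields $S_{u,v}(t)\ge\big(1-\eta(\delta)\big)\hat S_u(t)\ge\big(1-\eta(\delta)\big)\hat S_0(t)$ on $[0,2T^*]$. Hence on $\Omega_1\cap\Omega_2$, for $t\in[T^*,2T^*]$, the identity gives
$$\ln I_{u,v}(t)\ge\ln v+\Big(\lambda-\beta\theta-\eta(\delta)\tfrac{\alpha\beta}{\mu}-\tfrac\lambda8\Big)t\ge\ln v+\tfrac{3\lambda}{4}\,t,$$
provided $\delta$ is also small enough that $\eta(\delta)\frac{\alpha\beta}{\mu}<\lambda/16$. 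Choosing $\delta\in(0,1)$ meeting this and $\delta<e^{-4\lambda T^*}$, the event $\Omega_1\cap\Omega_2$ has probability at least $1-\eps$, and on it both bounds of the lemma hold for all $u\in[0,H]$, $v\in(0,\delta]$, $t\in[T^*,2T^*]$.

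The step I expect to be the main obstacle is the lower bound: one must show that the depletion of susceptibles caused by the infection is negligible \emph{uniformly} in $u\in[0,H]$ and $v\in(0,\delta]$ over the \emph{entire} window $[0,2T^*]$ — this is precisely why $I$ has to be pinned uniformly small on all of $[0,2T^*]$ first, and hence why $\delta$ must be chosen last — and then one must absorb the possibly strongly negative Brownian fluctuation $\sigma_2B(t)$, which is what forces $T^*$ to be large so that the linear growth $\lambda t$ dominates. Keeping the order of the choices correct ($\theta$, then $T^*$, then $C_1,C_2$, then $\delta$) is the only genuine subtlety; everything else is routine estimation.
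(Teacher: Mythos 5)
Your proof is correct, but it follows a genuinely different route from the paper's. The paper keeps $I$ below a level $\theta$ by means of the stopping time $\zeta_{u,v}=\inf\{t:I_{u,v}(t)\geq\theta\}$ (whose tail bound $\PP\{\zeta_{u,v}\leq 2T^*\}\leq\eps/3$ is imported from \cite[Lemmas 3.1, 3.2]{DY}), and then absorbs the depletion term $-\beta SI\geq-\beta\theta S$ into the drift of an auxiliary process $\tilde S$ with coefficient $-(\mu+\beta\theta)$, so that $S_{u,v}\geq\tilde S_0$ up to $\zeta_{u,v}$ and the ergodic average $\alpha/(\mu+\beta\theta)$ is close enough to $\alpha/\mu$; the upper bound $\ln I<0$ then comes for free from $I\leq\theta<1$. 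You instead pin $I$ small on all of $[0,2T^*]$ directly from the explicit formula $\ln I_{u,v}(t)=\ln v+\int_0^t(\beta S_{u,v}-c_2)+\sigma_2B(t)$ together with the pathwise domination $S_{u,v}\leq\hat S_u\leq\hat S_H$, and you control the depletion of $S$ multiplicatively through the log-difference identity $0\leq\ln\hat S_u(t)-\ln S_{u,v}(t)\leq\beta\int_0^tI_{u,v}$, getting $S_{u,v}\geq e^{-\eta(\delta)}\hat S_0$. Your version is self-contained (no appeal to an external stopping-time lemma and no modified auxiliary SDE), at the cost of having to introduce the quantile constants $C_1,C_2$ and to respect the ordering $\theta\to T^*\to(C_1,C_2)\to\delta$, which you do correctly: the events $\Omega_1,\Omega_2$ depend only on $\hat S_0,\hat S_H,B$, so the estimate is uniform over $u\in[0,H]$, $v\in(0,\delta]$ as required. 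The only cosmetic points worth tightening are that $S_{u,v}\geq e^{-\eta}\hat S_u\geq(1-\eta)\hat S_u$ should be stated via $e^{-x}\geq1-x$, and that the log-difference identity needs $u>0$ (which is automatic since initial data lie in $\R^{2,\circ}_+$; the paper's statement with $u\in[0,H]$ has the same harmless abuse).
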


\begin{proof}
\par Let $\theta\in(0, 1)$ such that
\begin{equation}\label{e2.30}
\dfrac{\beta\alpha}{\mu+\beta\theta}-c_2\geq \dfrac{11\lambda}{12}.
\end{equation}
Let $\tilde{S}_{u}(t)$ be the solution with initial value $u$ to
\begin{equation}\label{e2.31}
  d\tilde{S}(t)=[\alpha-(\beta\theta+\mu)\tilde{S}(t)]dt+\sigma_1\tilde{S}(t)dB(t)
\end{equation}
  Similar to \eqref{e2.3'},
  $$\PP\left\{\lim_{t\to\infty}\frac1t\int_0^t\tilde{S}_u(\tau)d\tau=\frac{\alpha}{\mu+\beta\theta}\right\}=1\;\;\forall\, u\in [0.\infty).$$
In view of the strong law of large numbers for martingales, $\PP\{\lim_{t\to\infty}\dfrac{B(t)}t=0\}=1$.
Hence, there exists $T^*>1$ such that
\begin{equation}\label{e2.37bs}
  \PP\Big\{\frac{\sigma_1 B(t)}t\geq\frac{-\lambda}{12}\;\;\forall \, t\geq T^*\Big\}\geq 1-\frac{\eps}3
\end{equation}
and
$$\PP\left\{\frac1t\int_0^t\tilde{S}_0(\tau)d\tau\geq\frac{\alpha}{\mu+\beta\theta}-\frac{\lambda}{12\beta}\;\;\forall\, t\geq T^*\right\}\geq1-\frac{\eps}3.$$
By the uniqueness of solutions to \eqref{e2.31},
$$\PP\left\{\tilde{S}_0(t)\leq\tilde{S}_u(t)\;\;\forall t\geq 0\right\}=1\;\;\forall\, u\geq 0.$$
Hence,
\begin{equation}\label{e2.38bs}
\PP\left\{\frac1t\int_0^t\tilde{S}_u(\tau)d\tau\geq\frac{\alpha}{\mu+\beta\theta}-\frac{\lambda}{12\beta}\;\;\forall\, t\geq T^*\right\}\geq1-\frac{\eps}3.
\end{equation}
Similar to \cite[Lemmas 3.1, 3.2]{DY}, it can be shown that there exists $\delta\in (0,\theta),$
\begin{equation}\label{e2.39bs}
  \PP\left\{\zeta_{u,v}\leq2T^*\right\}\leq\frac{\eps}3,\;\;\forall v\leq\delta, \;\;u\in [0, H] \text{ where } \zeta_{u,v}=\inf\{t\geq 0: I_{u,v}(t)\geq\theta\}.
\end{equation}
Observe also that
\begin{equation}\label{e2.40}
  \PP\left\{S_{u,v}(t)\geq \tilde{S}_u(t)\;\;\forall\, t\leq\zeta_{u,v}\right\}=1
\end{equation}
which we have from the comparison theorem.  From \eqref{e2.30}, \eqref{e2.37bs},\eqref{e2.38bs}, \eqref{e2.39bs} and \eqref{e2.40} we can be show  that with probability greater than $1-\eps,$ for all $t\in[T^*, 2T^*],$
\begin{align*}
  \ln\theta&\geq\ln(I_{u,v}(t))=\ln v+\beta\int_0^tS_{u,v}(\tau)d\tau-c_2t+\sigma_1 B(t)
  \\&
  \geq \ln v+\frac{\beta\alpha t}{(\mu+\beta\theta)}-\frac{\lambda t}{12}-c_2t-\frac{\lambda t}{12}
  \geq \ln v+\frac{3\lambda}{4}t.
\end{align*}
The proof is complete.
\end{proof}

\begin{prop}\label{prop2.3} Assume $\lambda>0$.
  Let $\delta, H$ and $T^*$ be  as in Lemma \ref{lem2.5}. There exists $K_5,$ independent of $T^*$ such that
  \begin{equation}
    \E[\ln I_{u,v}(t)]_-^2\leq [\ln v]_-^2-\lambda t[\ln v]_-+K_5 t^2
  \end{equation}
  for any $v\in (0,\infty),\;\; 0\leq u\leq H,\; t\in[T^*, 2T^*].$
\end{prop}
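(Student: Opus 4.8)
The plan is to split the estimate according to whether $v$ lies below the threshold $\delta$ of Lemma \ref{lem2.5} or not, and on each piece to control $\E[\ln I_{u,v}(t)]_-^2$ by combining Lemma \ref{lem2.5} (on a high-probability event) with Lemma \ref{lem2.4} (on its complement). Throughout, fix $t\in[T^*,2T^*]$ and note that $t[\ln v]_-\geq 0$.

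Suppose first that $0<v\leq\delta$. Let $A:=\{\ln v+\tfrac{3\lambda s}{4}\leq \ln I_{u,v}(s)<0\ \forall\, s\in[T^*,2T^*]\}$, so that $\PP(A)\geq 1-\eps$ by Lemma \ref{lem2.5}. On $A$ we have $[\ln I_{u,v}(t)]_-=-\ln I_{u,v}(t)\leq -\ln v-\tfrac{3\lambda t}{4}=[\ln v]_--\tfrac{3\lambda t}{4}$, and the right-hand side is nonnegative on $A$; squaring and integrating over $A$ gives
\[ \E\big([\ln I_{u,v}(t)]_-^2\1_A\big)\leq [\ln v]_-^2\PP(A)-\tfrac{3\lambda}{2}(1-\eps)\,t[\ln v]_-+\tfrac{9\lambda^2}{16}t^2 . \]
On $A^c$, Lemma \ref{lem2.4} together with $\PP(A^c)\leq\eps$ yields $\E([\ln I_{u,v}(t)]_-^2\1_{A^c})\leq[\ln v]_-^2\PP(A^c)+K_3\sqrt\eps\,t[\ln v]_-+K_4\sqrt\eps\,t^2$. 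Adding the two bounds, the $[\ln v]_-^2$ terms reassemble into $[\ln v]_-^2$, the coefficient of $t[\ln v]_-$ becomes $-\tfrac{3\lambda}{2}(1-\eps)+K_3\sqrt\eps$, which is $<-\lambda$ by the first inequality in \eqref{e2.35}, and since $t[\ln v]_-\geq0$ we conclude $\E[\ln I_{u,v}(t)]_-^2\leq[\ln v]_-^2-\lambda t[\ln v]_-+\big(\tfrac{9\lambda^2}{16}+K_4\sqrt\eps\big)t^2$.

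Suppose next that $v>\delta$. Then $[\ln v]_-\leq-\ln\delta$, and the stay-below-$\theta$ estimate used to produce $\delta$ in Lemma \ref{lem2.5} (as in \cite[Lemmas 3.1, 3.2]{DY}) allows one to take $\delta$ with $-\ln\delta\leq C_0T^*$ for a constant $C_0$ independent of $T^*$. Applying Lemma \ref{lem2.4} with $A=\Omega$ and using $T^*\leq t$,
\[ \E[\ln I_{u,v}(t)]_-^2\leq[\ln v]_-^2+K_3 t[\ln v]_-+K_4t^2\leq[\ln v]_-^2-\lambda t[\ln v]_-+\big((\lambda+K_3)C_0+K_4\big)t^2 . \]

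It then suffices to take $K_5:=\max\big\{\tfrac{9\lambda^2}{16}+K_4\sqrt\eps,\ (\lambda+K_3)C_0+K_4\big\}$, which depends only on the model parameters, on $\eps$, and on $C_0$, hence not on $T^*$. The one point needing care is the bookkeeping in the first case: after recombining the estimates over $A$ and $A^c$ the coefficient of $[\ln v]_-^2$ must come out exactly $1$ and the coefficient of $t[\ln v]_-$ must be dominated by $-\lambda$, which is precisely what the constants in \eqref{e2.35} and the strict upper bound $\ln I_{u,v}(t)<0$ in Lemma \ref{lem2.5} are tailored for. Verifying $-\ln\delta=O(T^*)$ in the second case is routine but does require recalling the quantitative form of the exit estimate.
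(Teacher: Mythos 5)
Your proof is correct and follows essentially the same route as the paper: the same split at $v=\delta$, the same use of the high-probability event from Lemma \ref{lem2.5} combined with Lemma \ref{lem2.4} on its complement, and the same recombination of coefficients via \eqref{e2.35}. The only difference is that you explicitly address the $T^*$-independence of $K_5$ in the case $v>\delta$ via the (asserted, not proved) bound $-\ln\delta\leq C_0T^*$, a point the paper glosses over by simply ``letting $K_5$ be sufficiently large.''
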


\begin{proof}
  First, consider $v\in (0,\delta],\;\; 0\leq u\leq H.$ We have $\PP(\Omega_{u,v})\geq1-\eps$ where
  $$\Omega_{u,v}=\big\{\ln v+\dfrac{3\lambda t}4\leq \ln I_{u,v}(t)<0\;\;\;\forall\, t\in[T^*, 2T^*]\big\}.$$
  In $\Omega_{u,v}$ we have
  $$-\ln v-\frac{3\lambda t}4\geq-\ln I_{u,v}(t)> 0.$$
  Hence,
  $$0\leq [\ln I_{u,v}(t)]_-\leq [\ln v]_--\frac{3\lambda t}4\;\;\forall\, t\in[T^*, 2T^*].$$
  As a result
  $$[\ln I_{u,v}(t)]_-^2\leq [\ln v]_-^2-\frac{3\lambda t}2[\ln v]_-+\frac{9\lambda^2 t^2}{16}\;\;\forall\, t\in[T^*, 2T^*].$$
  Which implies that
  \begin{equation}\label{e2.37}
    \E\left[\1_{\Omega_{u,v}}[\ln I_{u,v}(t)]^2_-\right]\leq\PP(\Omega_{u,v})[\ln v]_-^2-\frac{3\lambda t}{2}\PP(\Omega_{u,v})[\ln v]_-+\frac{9\lambda^2 t^2}{16}\PP(\Omega_{u,v}).
  \end{equation}
 In $\Omega_{u,v}^c=\Omega\backslash \Omega_{u,v},$ we have from Lemma \ref{lem2.4} that
 \begin{equation}\label{e2.38}
    \E\left[\1_{\Omega^c_{u,v}}[\ln I_{u,v}(t)]^2_-\right]\leq\PP(\Omega^c_{u,v})[\ln v]_-^2+K_3t \sqrt{\PP(\Omega^c_{u,v})}[\ln v]_-+K_4t^2 \sqrt{\PP(\Omega^c_{u,v})}.
 \end{equation}
 Adding \eqref{e2.37} and \eqref{e2.38} side by side, we obtain
  \begin{equation}\label{e2.39}
    \E[\ln I_{u,v}(t)]^2_-\leq[\ln v]_-^2+\Big(-\frac{3\lambda}2(1-\eps)+K_3\sqrt{\eps}\Big)t[\ln v]_-+\Big(\frac{9\lambda^2}{16}+K_4\Big)t^2 .
 \end{equation}
 In view of \eqref{e2.35} we deduce
  \begin{equation*}
    \E[\ln I_{u,v}(t)]^2_-\leq[\ln v]_-^2-\lambda t[\ln v]_-+\Big(\frac{9\lambda^2}{16}+K_4\Big)t^2 .
 \end{equation*}
 Now, for $v\in [\delta, \infty)$ and $0\leq u\leq H,$ we have from Lemma \ref{lem2.4} that
 \begin{align*}
    \E[\ln I_{u,v}(t)]^2_-&\leq[\ln v]_-^2+K_3 t[\ln v]_-+K_4t^2
   \\&\leq |\ln\delta|^2+K_3 t|\ln\delta|+K_4t^2.
 \end{align*}
 Letting $K_5$ sufficiently large such that $K_5>\frac{9\lambda^2}{16}+K_4$ and
 $|\ln\delta|^2+K_3 t|\ln\delta|+K_4t^2\leq K_5 t^2\;\;\forall t\in [T^*, 2T^*]$,
 we obtain the desired result.
\end{proof}
\begin{prop}\label{prop2.4} Assume $\lambda>0$.
  There exists $K_6>0$  such that
  $$\E[\ln I_{u,v}(2T^*)]_-^2\leq [\ln v]_-^2-\frac{\lambda}2[\ln v]_-+K_6 {T^*}^2$$
  for $v\in(0,\infty), u>H.$
\end{prop}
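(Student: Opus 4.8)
The plan is to let the process run until $S$ first reaches the level $H$ and then restart there, exploiting that Proposition~\ref{prop2.3} applies precisely when the initial susceptible value lies in $[0,H]$. I would introduce the stopping time $\xi=\xi_{u,v}:=\inf\{t\ge0:S_{u,v}(t)\le H\}$. Since $S_{u,v}(t)\le\hat S_u(t)$ by the comparison theorem and $\hat S_u$ is positive recurrent on $(0,\infty)$ --- by \eqref{e2.3'} its time average tends to $\alpha/\mu$, so $\hat S_u$, hence $S_{u,v}$, cannot stay above $H$ forever --- we have $\xi<\infty$ a.s. On $[0,\xi)$ one has $S_{u,v}(t)>H$, so It\^o's formula gives
\begin{equation*}
\ln I_{u,v}(t)=\ln v+\beta\int_0^tS_{u,v}(s)\,ds-c_2t+\sigma_2B(t)\ \ge\ \ln v+(\beta H-c_2)t+\sigma_2B(t),\qquad 0\le t\le\xi,
\end{equation*}
where $\beta H-c_2\ge c_2+2+\lambda>0$ by the first relation in \eqref{e2.36}. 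In particular, with $\ell:=-\inf_{0\le t\le T^*}\big((\beta H-c_2)t+\sigma_2B(t)\big)\ge0$ we get, on $\{\xi\le T^*\}$, the pathwise bounds $[\ln I_{u,v}(\xi)]_-\le[\ln v]_-+\ell$ always, and $[\ln I_{u,v}(\xi)]_-\ge[\ln v]_-$ whenever $I_{u,v}(\xi)<v$; moreover $\PP(\ell>\lambda/4)\le\exp\{-\lambda(\beta H-c_2)/(2\sigma_2^2)\}$, which the third relation in \eqref{e2.36} makes small. I would then split $\E[\ln I_{u,v}(2T^*)]_-^2$ over $\{\xi\le T^*\}$ and $\{\xi>T^*\}$.

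On $\{\xi\le T^*\}$: by continuity $S_{u,v}(\xi)=H$ and $s:=2T^*-\xi\in[T^*,2T^*]$, so the strong Markov property and Proposition~\ref{prop2.3} applied with initial value $(H,I_{u,v}(\xi))$ give, on $\{\xi\le T^*\}$,
\begin{equation*}
\E\big([\ln I_{u,v}(2T^*)]_-^2\,\big|\,\F_\xi\big)\le[\ln I_{u,v}(\xi)]_-^2-\lambda s\,[\ln I_{u,v}(\xi)]_-+K_5 s^2.
\end{equation*}
When $I_{u,v}(\xi)\ge v$ one has $[\ln I_{u,v}(\xi)]_-\le[\ln v]_-$ and, since $s\ge T^*>1$, an elementary bound for $x\mapsto x^2-\lambda sx$ on $[0,[\ln v]_-]$ makes the right-hand side $\le[\ln v]_-^2-\lambda[\ln v]_-+C{T^*}^2$ --- a full $-\lambda[\ln v]_-$, i.e.\ a margin of $-\tfrac\lambda2[\ln v]_-$ over what is needed. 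When $I_{u,v}(\xi)<v$ but $\ell\le\lambda/4$ one uses $[\ln v]_-\le[\ln I_{u,v}(\xi)]_-\le[\ln v]_-+\ell$ to bound the right-hand side by $[\ln v]_-^2-\tfrac\lambda2[\ln v]_-+\ell^2+C{T^*}^2\le[\ln v]_-^2-\tfrac\lambda2[\ln v]_-+C'{T^*}^2$. On the exceptional set $\{\ell>\lambda/4\}$ one retains only $[\ln I_{u,v}(\xi)]_-\le[\ln v]_-+\sigma_2\sup_{0\le r\le T^*}|B(r)|$, whose squared expectation --- controlled by Doob's inequality together with $\PP(\ell>\lambda/4)$ --- contributes only $o(1)\,[\ln v]_-^2+o(1)\,[\ln v]_-+C{T^*}^2$.

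On $\{\xi>T^*\}$: here $S_{u,v}(t)>H$ for $t<T^*$, so discarding the nonnegative part of $\beta\int S_{u,v}$ after $\xi\wedge2T^*$, $\ln I_{u,v}(2T^*)\ge\ln v+(\beta H-2c_2)T^*+\sigma_2B(2T^*)\ge\ln v+(2+\lambda)T^*+\sigma_2B(2T^*)$. On $\{\sigma_2B(2T^*)\ge-\tfrac12(\beta H-2c_2)T^*\}$ this is $\ge\ln v+\tfrac\lambda2T^*$, giving $[\ln I_{u,v}(2T^*)]_-^2\le[\ln v]_-^2-\lambda[\ln v]_-+C{T^*}^2$ (again a full $-\lambda[\ln v]_-$); on the complementary event, of probability $\le\exp\{-(\beta H-2c_2)^2T^*/(16\sigma_2^2)\}$ and hence small by \eqref{e2.36} for $H$ large, one keeps only $[\ln I_{u,v}(2T^*)]_-\le[\ln v]_-+2c_2T^*+\sigma_2|B(2T^*)|$. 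Adding all pieces, and using that the total weight of the ``good'' pieces, where the coefficient $-\lambda$ (rather than $-\tfrac\lambda2$) is available, is bounded below by a fixed positive amount uniformly in $u>H$ and $v>0$ --- because $\PP(\{\xi\le T^*,\,I_{u,v}(\xi)<v\})$ stays bounded away from $1$ --- the associated margins absorb the $o(1)$ contributions of the exceptional sets, and one obtains $\E[\ln I_{u,v}(2T^*)]_-^2\le[\ln v]_-^2-\tfrac\lambda2[\ln v]_-+K_6{T^*}^2$.

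The main obstacle is exactly this last bookkeeping. The two routine ingredients --- the strong Markov step followed by Proposition~\ref{prop2.3}, and the comparison estimates for $\ln I_{u,v}$ --- are straightforward; the work lies in (i) choosing the exceptional events so that their probabilities are dominated by the exponentially small quantities in \eqref{e2.36}, and (ii) arranging the algebra so that, after summing over all the sub‑events, the coefficient of $[\ln v]_-^2$ never exceeds $1$ while the coefficient of $[\ln v]_-$ stays $\le-\lambda/2$, \emph{uniformly} in $v\in(0,\infty)$ and $u>H$. The relaxation from coefficient $\lambda$ on $[T^*,2T^*]$ (Proposition~\ref{prop2.3}) to $\lambda/2$ at the single time $2T^*$ is precisely what provides the margin needed in (ii), since $2T^*-\xi\ge T^*=\tfrac12\cdot2T^*$; if necessary one also enlarges $T^*$ and correspondingly shrinks $\delta$ and $\eps$.
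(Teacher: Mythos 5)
Your strategy is the same as the paper's: stop $S$ at the first time $\xi$ it reaches the level $H$ (the paper uses $\xi_{u,v}=T^*\wedge\inf\{t>0:S_{u,v}(t)\le H\}$), exploit the strong drift $\beta H-c_2$ of $\ln I$ before $\xi$, restart at $(H,I_{u,v}(\xi))$ via the strong Markov property and Proposition~\ref{prop2.3} when $\xi<T^*$, and push everything else into exceptional sets of probability at most $\eps$ controlled by the exponential martingale inequality and \eqref{e2.36}. Your three pieces correspond to the paper's $\Omega_3$ ($\xi=T^*$, i.e.\ $S$ stays above $H$), $\Omega_4$ ($\xi<T^*$ with $-\ln I(\xi)\le-\ln v+\lambda/8$, your running-minimum event $\{\ell\le\lambda/4\}$), and $\Omega_5$ (the remainder, handled by Lemma~\ref{lem2.4}).

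There is, however, one step in your bookkeeping that does not close as written. In the middle paragraph you discard the factor $s=2T^*-\xi\ge T^*$ and record the good pieces as contributing $-\lambda[\ln v]_-$ and $-\tfrac{\lambda}2[\ln v]_-$, i.e.\ margins \emph{independent of} $T^*$. But the exceptional pieces, estimated through Lemma~\ref{lem2.4} (or Doob plus Cauchy--Schwarz), contribute $+\,2K_3T^*\sqrt{\eps}\,[\ln v]_-$, which \emph{grows linearly in} $T^*$. Since $\eps$ is fixed by \eqref{e2.35} before $T^*$ is chosen, and $T^*$ increases as $\eps$ decreases, a $T^*$-free margin cannot absorb an $O(T^*\sqrt{\eps})$ error, and the ``shrink $\eps$ and enlarge $T^*$'' escape at the end is circular. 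The repair is exactly what your closing sentence hints at and what the paper does: retain the factor $s\ge T^*$ so that every good piece contributes $-cT^*[\ln v]_-$ with $c\ge\tfrac{3\lambda}{4}$; then $T^*$ factors out of the entire coefficient of $[\ln v]_-$ and \eqref{e2.35} yields $\bigl(-\tfrac{3\lambda}4(1-\eps)+2K_3\sqrt{\eps}\bigr)T^*\le-\tfrac{\lambda T^*}2\le-\tfrac{\lambda}2$, uniformly in $T^*$. (Your proposed uniform lower bound on the mass of the pieces carrying the full $-\lambda$, via $\PP\{\xi\le T^*,\,I_{u,v}(\xi)<v\}$ staying away from $1$, is both unproven and, even if granted, insufficient against an error that scales with $T^*$.) Apart from this, the argument --- the hitting-time decomposition, the reduction to Proposition~\ref{prop2.3} at $(H,I_{u,v}(\xi))$, the Gaussian/martingale tail bounds, and the implicit separate treatment of $v\ge e^{-\lambda T^*/2}$ inside your parabola estimate --- matches the paper's proof.
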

\begin{proof}
  First, consider $v\leq\exp\{-\frac{\lambda T^*}2\}.$ Defined the stopping time
  $$\xi_{u,v}=T^*\wedge\inf\{t>0: S_{u,v}(t)\leq H\}.$$
Let $$\Omega_1=\big\{\sigma_2 B(2T^*)-\frac{(\beta H-2c_2)T^*}{2}\leq 1\big\}$$
and $$\Omega_2=\Big\{\sigma_2 B(t)-(\beta H-c_2)t\leq\frac{\lambda}8\,\forall\,t\in[0,2T^*]\Big\}.$$
By the exponential martingale inequality \cite[Theorem 7.4, p. 44]{MAO},
\begin{equation}\label{e2.41}
  \PP(\Omega_1)\geq 1-\exp\Big\{-\frac{\beta H-2c_2}{2\sigma_2^2}\Big\}\geq 1-\frac{\eps}{2}.
\end{equation}
and
$$\PP(\Omega_2)\geq 1-\exp\Big\{-\frac{\lambda(\beta H-c_2)}{4\sigma_2^2}\Big\}\geq 1-\frac{\eps}2.$$
Let
$$\Omega_3=\Omega_1\cap \{\xi_{u,v}= T^*\};\;\;\;\Omega_4=\{-\ln I_{u,v}(\xi_{u,v})\leq -\ln v+\frac{\lambda}8\}\cap\{\xi_{u,v}<T^*\}; \;\;\;\Omega_5=\Omega\backslash(\Omega_3\cup\Omega_4).$$
 If $\omega\in \Omega_3,$ we have
\begin{align*}
  -\ln I_{u,v}(2T^*)&=-\ln v-\int_0^{2T^*}(\beta S_{u,v}(\tau)-c_2)d\tau+\sigma_2 B(2T^*)
  \\&\leq-\ln v-\int_0^{T^*}(\beta S_{u,v}(\tau)-c_2)d\tau+ \int_{T^*}^{2T^*}c_2d\tau+\sigma_2B(2T^*)
  \\&\leq-\ln v-T^*(\beta H-c_2)+ T^*c_2+\sigma_2B(2T^*)
\\&\leq-\ln v-T^*(\beta H-2c_2)+\sigma_2B(2T^*)
\\& \leq-\ln v-\frac{T^*(\beta H-2c_2)}{2}+ 1\;\;\;(\text{in view of \eqref{e2.41}})
\\&\leq -\ln v-\frac{\lambda T^*}2 \text{ (by } \eqref{e2.36}).
\end{align*}
If $v\leq \exp\{-\frac{\lambda T^*}{2}\}$ then $-\ln v-\frac{\lambda T^*}{2}>0.$ Therefore
$$[\ln I_{u,v}(2T^*)]_-\leq -\frac{\lambda T^*}{2}+[\ln v]_-.$$
Squaring and then multiplying by $\1_{\Omega_3}$ and then taking expectation both sides, we yield
\begin{equation}\label{e2.42}
  \E\left( [\ln I_{u,v}(2T^*)]^2_-\1_{\Omega_3}\right)\leq [\ln v]_-^2\PP(\Omega_3)-\lambda T^*[\ln v]_-\PP(\Omega_3)+\frac{\lambda^2 {T^*}^2}{4}.
\end{equation}
If $\omega\in \Omega_2,$
\begin{align*}
  -\ln I_{u,v}(\xi_{u,v})&=-\ln v-\int_0^{\xi_{u,v}}[\beta S_{u,v}(\tau)-c_2]d\tau+\sigma_2 B(\xi_{u,v})
   \\&\leq -\ln v-(\beta H-c_2)\xi_{u,v}+\sigma_2B(\xi_{u,v})\leq-\ln v+\frac{\lambda}8.
\end{align*}
As a result, $\Omega_2\cap\{\xi_{u,v}<T^*\}\subset\Omega_4$. Hence $$\PP(\Omega_5)=\PP(\Omega_5\cap\{\xi_{u,v}<T^*\})+\PP\{\Omega_5\cap\{\xi_{u,v}=T^*\})\leq\PP(\Omega_1^c)+\PP(\Omega_2^c)\leq\eps.$$
Let $t<T^*$, $u'>0$ and $v'$ satisfy $-\ln v'\leq -\ln v+\frac{\lambda}8\leq0$.  In view of Proposition \ref{prop2.3} and the strong Markov property,  	 we can estimate the conditional expectation
\begin{align*}
\E\Big[[\ln I_{u,v}(2T^*)]_-^2\Big{|}&\xi_{u,v}=t, \;I_{u,v}(\xi_{u,v})=v', S_{u,v}(\xi_{u,v})=u'\Big]
\\&
\leq [\ln v']_-^2-\lambda(2T^*-t)[\ln v']_-+K_5(2T^*-t)^2
\\&
\leq [\ln v']_-^2-\lambda T^*[\ln v']_-+4K_5{T^*}^2
\\&
\leq \big(-\ln v+\frac{\lambda}8\big)^2-\lambda T^*(-\ln v)+4K_5{T^*}^2
\\&
\leq \big(-\ln v\big)^2-\big(\lambda T^*-\frac{\lambda}4\big)(-\ln v)+4K_5{T^*}^2+\frac{\lambda^2}{64}
\\&
\leq [\ln v]_-^2-\frac{3\lambda T^*}4[\ln v]_-+4K_5{T^*}^2+\frac{\lambda^2}{64}.
\end{align*}
As a result,
\begin{equation}\label{e2.43}
  \E\left(\1_{\Omega_4}[\ln I_{u,v}(2T^*)]_-^2\right)
\leq [\ln v]_-^2\PP(\Omega_4)-\frac{3\lambda T^*}4[\ln v]_-\PP(\Omega_4)+4K_5{T^*}^2+\frac{\lambda^2}{64}.
\end{equation}
In view of Lemma \ref{lem2.4},
\begin{equation}\label{e2.44}
  \E\left(\1_{\Omega_5}[\ln I_{u,v}(2T^*)]_-^2\right)
\leq [\ln v]_-^2\PP(\Omega_5)+K_3\sqrt{\PP(\Omega_5)}2T^*[\ln v]_-+4K_4{T^*}^2.
\end{equation}
Adding side by side \eqref{e2.42}, \eqref{e2.43}, \eqref{e2.44}, we have
\begin{align}
\E\left([\ln I_{u,v}(2T^*)]_-^2\right)
&\leq [\ln v]_-^2-T^*\left(\frac{3\lambda}{4}(1-\eps)+2K_3\sqrt{\eps}\right)+K_7{T^*}^2
\notag\\&
\leq
[\ln v]_-^2-\frac{\lambda T^*}{2}+K_7{T^*}^2
  \end{align}
for some $K_7>0.$
We note that, if $v\geq \exp\{-\frac{\lambda T^*}2\}$ then $-\ln v\leq \frac{\lambda T^*}{2}.$
Therefore, it follows from Lemma \ref{lem2.4} that
$$
\E\left([\ln I_{u,v}(2T^*)]_-^2\right)
\leq\Big(\frac{\lambda^2 }{4}+K_3\frac{\lambda }2+4K_4\Big){T^*}^2.
 $$
Let $K_6=\max\{K_7, \frac{\lambda^2}{4}+K_3\frac{\lambda}2+4K_4\},$ we have
\begin{equation*}
\E\left([\ln I_{u,v}(2T^*)]_-^2\right)\leq [\ln v]_-^2-\frac{\lambda T^*}2[\ln v]_-+K_6{T^*}^2\;\; \forall u\geq H, \; v\in(0, \infty).
 \end{equation*}
The proof is complete.
 \end{proof}

 \begin{lem}\label{petite}
  Any compact set $K\subset\R^{2,\circ}_+$ is petite for the Markov chain $(S_{u,v}(2nT^*), I_{u,v}(2nT^*))$ $(n\in\N)$.
The irreducibility and aperiodicity of $(S_{u,v}(2nT^*), I_{u,v}(2nT^*))$ $(n\in\N)$ is a byproduct (see \cite{EN, MT}).
\end{lem}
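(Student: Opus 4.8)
The proof combines the two ingredients already available: the smoothness of the transition density $p(t,u,v,u',v')$ for every $t>0$ furnished by Lemma~\ref{asp3.1}, and the reachability properties of the control system \eqref{e3.3} established in Lemma~\ref{lem2.3}. Since \eqref{e3.2} is precisely the deterministic control system associated with the Stratonovich equation \eqref{e2.11ss}, the Stroock--Varadhan support theorem gives $\supp P(t,(u,v),\cdot)=\overline{\mathcal{O}^+_t(u,v)}$ for all $(u,v)\in\R^{2,\circ}_+$ and $t>0$, where $\mathcal{O}^+_t(u,v)$ denotes the set of points reachable from $(u,v)$ at time exactly $t$; combined with Lemma~\ref{asp3.1}, this yields $p(t,u,v,u',v')>0$ whenever $(u',v')\in\inte\mathcal{O}^+_t(u,v)$.

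Now fix a compact set $K\subset\R^{2,\circ}_+$. The first task is to produce an open set $G$ and an $n_0\in\N$ with $G\subset\mathcal{O}^+_{2n_0T^*}(u,v)$ for every $(u,v)\in K$. Passing to the coordinate $z=u^rv$ of \eqref{e3.3}, fix a point $(u^\star,v^\star)$ with $z^\star:=(u^\star)^rv^\star$ strictly larger than $\sup_{(u,v)\in K}u^rv$, which is finite by compactness. For $(u,v)\in K$ I would chain the assertions of Lemma~\ref{lem2.3}: use part~1 to steer $u$ to the auxiliary value required by part~2 while moving $z$ by at most $\eps$; then use part~2 to raise $z$ to $z^\star$ at constant $u$ (legitimate, since $z<z^\star$ at this stage); finally use part~1 again to steer $u$ to $u^\star$ while moving $z$ by at most $\eps$. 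Thus $(u^\star,v^\star)$ is $\eps$-reachable from every $(u,v)\in K$; and since open neighbourhoods of $(u^\star,v^\star)$ are in turn reachable from $(u^\star,v^\star)$ itself in arbitrarily short additional time (one further small maneuver via part~1), the reachability can be arranged to take place at one common time $2n_0T^*$ with $n_0$ large, the reachability times over $K$ being bounded. Note that part~2 permits $z$ to be increased starting from an arbitrary positive value, so this argument needs no case distinction; in particular the lower bound $c^*$ of part~3 when $d^*>0$ plays no role here, and enters only the precise description of the support of the invariant measure elsewhere.

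By the support theorem and Lemma~\ref{asp3.1} it follows that $p(2n_0T^*,u,v,u',v')>0$ for all $(u,v)\in K$ and $(u',v')\in G$; by joint continuity of $p$ and compactness of $K\times\overline{G'}$, where $G'$ is an open set with $\overline{G'}$ a compact subset of $G$, there is $c>0$ with $p(2n_0T^*,u,v,u',v')\ge c$ on $K\times G'$. Setting $\psi(\cdot):=c\,\mathrm{Leb}(\cdot\cap G')$ and $\nu:=\delta_{n_0}$, we obtain for all $(u,v)\in K$ and $Q\in\cB(\R^{2,\circ}_+)$ that
\[
\K(u,v,Q)=\sum_{n=1}^{\infty}P(2nT^*,(u,v),Q)\,\nu(n)=P(2n_0T^*,(u,v),Q)\ge\psi(Q),\qquad\psi(\R^{2,\circ}_+)>0,
\]
which is exactly the definition of $K$ being petite.

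Finally, applying the same reachability argument to an arbitrary initial point $(u,v)\in\R^{2,\circ}_+$ shows $P(2nT^*,(u,v),Q)>0$ for some $n$ whenever $\mathrm{Leb}(Q\cap G')>0$, so the skeleton chain is $\varphi$-irreducible with $\varphi=\mathrm{Leb}|_{G'}$; and since $p(2nT^*,u,v,u',v')>0$ for all sufficiently large $n$ and all $(u,v),(u',v')\in G'$, the small set $G'$ returns to itself at every sufficiently large time, which precludes periodicity, so the chain is aperiodic (see \cite{EN,MT}). The step I expect to be most delicate is the bookkeeping in the second paragraph --- assembling the three reachability assertions of Lemma~\ref{lem2.3}, stated in the $(u,z)$ variables and involving auxiliary choices of the $u$-coordinate, into one open set reachable from all of $K$ at a single common time that is a multiple of $2T^*$ --- together with the transfer, via the support theorem and Lemma~\ref{asp3.1}, of bare control-theoretic reachability into a uniform strictly positive lower bound on the smooth density.
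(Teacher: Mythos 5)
Your overall strategy (reach a fixed open set from all of $K$, then lower-bound the density there) is the right shape, but two of your key steps do not actually follow from the ingredients you cite, and the paper's proof is organized precisely to avoid both. First, the assertion that $p(t,u,v,u',v')>0$ whenever $(u',v')\in\inte\mathcal{O}^+_t(u,v)$ is not a consequence of the support theorem plus smoothness of the density: the support theorem tells you that every neighborhood of such a point has positive $P(t,(u,v),\cdot)$-measure, hence that the continuous density is positive \emph{somewhere} in every neighborhood, but a continuous density can still vanish at an interior point of the support. Making this pointwise positivity rigorous requires an extra tool (e.g.\ a Harnack-type inequality for the hypoelliptic operator). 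The paper sidesteps this entirely: it only needs the density of $P(2T^*,(u_*,v_*),\cdot)$ to be positive at \emph{some} point $(u_\diamond,v_\diamond)$ (which is automatic since the density integrates to a positive number), and then continuity of $p$ gives a uniform bound $p\ge m'$ on $W_\delta\times G$ for a small neighborhood $W_\delta$ of $(u_*,v_*)$ and a small open $G\ni(u_\diamond,v_\diamond)$.

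Second, your reduction to a \emph{single} common time $2n_0T^*$ with $\nu=\delta_{n_0}$ is unsupported. Lemma~\ref{lem2.3} produces, for each pair of points, \emph{some} time $T$ and some control; it gives no uniform bound on $T$ over the compact set $K$, and hitting a target at an exactly prescribed time requires the ability to ``wait'' near the target, which you would have to construct (the control enters only the $u$-equation, so holding $z$ fixed forces you to sit at a zero of $h(\cdot,z)$, and you have not checked one exists at your chosen $z^\star$). The paper avoids both difficulties at once by using a \emph{constant} control $\phi_*$ under which every trajectory of \eqref{e3.2} converges to a positive equilibrium $(u_*,v_*)$: convergence means the controlled path lies in $W_\delta$ for \emph{all} sufficiently large times, so the support theorem gives $P(2nT^*,u,v,W_\delta)>0$ for all $n\ge n_{u,v}$ with no exact-time issue; the Feller property and a finite subcover of $K$ then yield finitely many times $n_{u_i,v_i}$ and a uniform $\rho_K$, and the petite structure is obtained with $\nu$ charging those finitely many integers rather than a single one (the definition of petite explicitly allows this). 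If you want to keep your control-theoretic chaining via Lemma~\ref{lem2.3}, you must either prove the uniform-time claim or, more simply, follow the paper and let $\nu$ be supported on finitely many times. Note also that Lemma~\ref{lem2.3} is used in the paper to identify the support of $\pi^*$, not to prove petiteness.
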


\begin{proof}
Note that, we can always choose $\phi_*\in\R$ such that $(c_1-\sigma_1\phi_*)>0$, $(c_2-\sigma_2\phi_*)>0$
and $\alpha\beta-(c_1-\sigma_1\phi_*)(c_2-\sigma_2\phi_*)>0$.
Hence, with the constant control $\phi_*$
we can show that the solution to \eqref{e3.2} with control $\phi_*$, $(u_{\phi_*}(t, u,v), v_{\phi_*}(t, u, v))$, converges to a positive equilibrium $(u_*, v_*)$ for all $(u,v)\in\R^{2,\circ}_+$.
Let $(u_\diamond,v_\diamond)\in\R^{2,\circ}_+$ such that
$p(2T^*, u_*, v_*, u_\diamond,v_\diamond)>0$.
By the smoothness of  $p(2T^*,\cdot,\cdot,\cdot,\cdot)$, there exists  a  neighborhood $W_\delta=(u_*-\delta,u_*+\delta)\times(v_*-\delta,v_*+\delta)$, that is invariant under \eqref{e3.2} and  a  open set $G\ni(u_\diamond,v_\diamond)$ such that
\begin{equation}\label{bs1}
p(1, u,v,u',v')\geq m'>0  \;\forall\, (u,v)\in W_\delta, (u',v')\in G.
\end{equation}
Since $(u_{\phi_*}(t, u,v), v_{\phi_*}(t, u, v))$ converges to a positive equilibrium $(u_*, v_*)$,
in view of the support theorem (see \cite[Theorem 8.1, p. 518]{IW}), there is $n_{u,v}\in\N$ such that
$$P(2n_{u,v}T^*,u,v, W_\delta):=2\rho_{u,v}>0.$$
Since $(S_{u,v}(t), I_{u,v}(t))$ is a Markov-Feller process,  there exists a open set $V_{u,v}\ni(u,v)$ such that
$P(n_{u,v},u',v', W_\delta)\geq \rho_{u,v} \;\forall (u',v')\in V_{u,v}.$
Since $K$ is a compact set,   there is a finite number of $V_{u_i,v_i}, \; i=1, \ldots, l$ satisfying $K\subset\bigcup_{i=1}^lV_{u_i, v_i}.$
Let $\rho_K=\min\{\rho_{u_i,v_i}, \; i=1, \ldots, l\}.$
For each $(u,v)\in K$, there exists $n_{u_i,v_i}$ such that
\begin{equation}\label{bs2}
  P(n_{u_i,v_i}, u,v, W_\delta)\geq\rho_K.
\end{equation}
From \eqref{bs1} \eqref{bs2}, for all $(u,v)\in K$ there exists $n_{u_i,v_i}$ such that
\begin{equation}\label{bs3}
  p((2n_{u_i,v_i}+2)T^*,u,v,u',v')\geq \rho_Km' \;\forall\, (u',v')\in G.
\end{equation}
Define the kernel
$$\K(u,v,Q):=\frac1l\sum_{i=1}^lP((2n_{u_i,v_i}+2)T^*,u,v, Q) \;\forall Q\in\mathcal B(\R^{2,\circ}_+).$$
We derive from \eqref{bs3} that
\begin{equation}\label{bs4}
  \K(u,v,Q)\geq \frac1l\rho_Km'\mu(G\cap Q)  \;\forall Q\in\mathcal B(\R^{2,\circ}_+),
\end{equation}
where $\mu(\cdot)$ is the Lebesgue measure on $\R^{2,\circ}_+.$
\eqref{bs4} means that every compact set $K\subset \R^{2,\circ}_+$  is petite for the Markov chain $(S_{u,v}(2T^*n), I_{u,v}(2T^*n)).$
\end{proof}
\begin{thm}\label{thm2.2}
Let $\lambda>0$, $d^*$ as in Lemma \ref{lem2.3}. There exists an invariant probability measure $\pi^*$ such that
\begin{equation}\label{etv}
\lim\limits_{t\to\infty} t^{q^*}\|P(t, (u,v), \cdot)-\pi^*(\cdot)\|=0 \;\forall(u,v)\in\R^{2,\circ}_+,
\end{equation}
where $\|\cdot\|$ is the total variation norm and $q^*$ is any positive number.
The support of $\pi^*$  is $\R^{2\circ}_+$ if $d^*\leq0$ and  is $\{(u,v)\in\R^{2,\circ}_+: u^rv\geq d^*\}$ if $d^*>0$.
Moreover, for any initial value $(u,v)\in\R^{2,\circ}_+$ and a $\pi^*$-integrable function $f$ we have
\begin{equation}\label{slln}
\PP\Big\{\lim\limits_{T\to\infty}\dfrac1T\int_0^Tf\big(S_{u,v}(t), I_{u,v}(t)\big)dt=\int_{\R_+^{2,\circ}}f(u',v')\pi^*(du', dv')\Big\}=1.
\end{equation}
\end{thm}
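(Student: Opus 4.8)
The plan is to reduce everything to the skeleton Markov chain $X_n:=\big(S_{u,v}(2nT^*),I_{u,v}(2nT^*)\big)$, $n\in\N$, produce for it a Lyapunov function with a subgeometric drift, and feed it into \cite[Theorem 3.6, p. 235]{JR} to get existence, uniqueness and a polynomial convergence rate; the support statement will then come from the control analysis of Lemma \ref{lem2.3} together with the support theorem, and the strong law \eqref{slln} from the ergodic theorem for positive Harris recurrent processes. First I would upgrade Lemma \ref{lem2.4} and Propositions \ref{prop2.3}--\ref{prop2.4} from the second moment to an arbitrary even moment $2k$: the bound $-\ln I_{u,v}(t)\le-\ln v+c_2t+\sigma_2|B(t)|$ and the binomial formula give, for every $A\in\F$ and $t\ge1$,
$$\E\big([\ln I_{u,v}(t)]_-^{2k}\1_A\big)\le[\ln v]_-^{2k}\,\PP(A)+\sum_{j=1}^{2k}\binom{2k}{j}C_j\,t^{j}[\ln v]_-^{2k-j}\sqrt{\PP(A)},$$
and then, splitting on the favourable event of Lemma \ref{lem2.5} (on which $[\ln I_{u,v}(t)]_-\le[\ln v]_--\tfrac{3\lambda t}{4}$) exactly as in Propositions \ref{prop2.3}--\ref{prop2.4} and shrinking $\eps$ depending on $k$, one obtains $T^*>1$ and constants $c_k,C_k>0$ with
$$\E[\ln I_{u,v}(2T^*)]_-^{2k}\le[\ln v]_-^{2k}-c_k\,[\ln v]_-^{2k-1}+C_k\qquad\text{for all }(u,v)\in\R^{2,\circ}_+ .$$

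Next, combining this with Lemma \ref{lem2.1}, which gives $\E U\big(S_{u,v}(2T^*),I_{u,v}(2T^*)\big)\le e^{-2K_1T^*}U(u,v)+K_2/K_1$, I would take
$$V(u,v):=1+A\,U(u,v)+[\ln v]_-^{2k},\qquad\gamma_k:=1-\tfrac1{2k},$$
and verify, for $A$ large and a suitably large \emph{compact} set $K\subset\R^{2,\circ}_+$, the drift inequality $\E V\big(S_{u,v}(2T^*),I_{u,v}(2T^*)\big)\le V(u,v)-\kappa_1V^{\gamma_k}(u,v)+\kappa_2\1_K(u,v)$ with $\kappa_1,\kappa_2>0$. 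Indeed $\E V(2T^*)\le V-A(1-e^{-2K_1T^*})U-c_k[\ln v]_-^{2k-1}+(AK_2/K_1+C_k)$, while $V^{\gamma_k}\le1+A^{\gamma_k}U^{\gamma_k}+[\ln v]_-^{2k-1}$ since $2k\gamma_k=2k-1$; off a large compact set, either $U$ or $[\ln v]_-$ is large, and then the matching decrease term ($A(1-e^{-2K_1T^*})U$ against $\kappa_1A^{\gamma_k}U^{\gamma_k}$, or $c_k[\ln v]_-^{2k-1}$ against $\kappa_1[\ln v]_-^{2k-1}$) dominates $\kappa_1V^{\gamma_k}$, the remaining constants being swept into $\kappa_2\1_K$. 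By Lemma \ref{petite}, $K$ is petite and $X_n$ is irreducible and aperiodic, so \cite[Theorem 3.6, p. 235]{JR} yields that $X_n$ is positive Harris with a unique invariant probability $\pi^*$ — which is also the unique invariant measure of the continuous-time process, by averaging $\pi^*$ over the period $2T^*$ — and that $\|P(2nT^*,(u,v),\cdot)-\pi^*\|$ decays at a polynomial rate whose exponent $\gamma_k/(1-\gamma_k)=2k-1$ grows with $k$. Since total variation is non-increasing under Markov kernels and $\pi^*P(s)=\pi^*$, for $t=2nT^*+s$ one has $\|P(t,(u,v),\cdot)-\pi^*\|\le\|P(2nT^*,(u,v),\cdot)-\pi^*\|$, so the rate passes to all $t$; as $k$ is arbitrary, \eqref{etv} follows for every $q^*>0$.

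For the support, by the Stroock--Varadhan support theorem (in the form \cite[Theorem 8.1, p. 518]{IW}) together with the uniqueness of $\pi^*$, its support is the closure of the reachable set ${\cal O}^+(u,v)$ of the deterministic control system \eqref{e3.2}, i.e.\ the unique closed invariant control set; passing to $z=u^rv$ via \eqref{e3.3}, the three parts of Lemma \ref{lem2.3} identify it — all of $\R^{2,\circ}_+$ when $d^*\le0$ (parts (1), (2) and (3a) show every point is reachable from every point), and the region described in the statement when $d^*>0$ (part (3b): that region is forward invariant and no admissible trajectory leaves it, while every point still reaches it by part (2)). Finally, since $(S_{u,v}(t),I_{u,v}(t))$ is positive Harris recurrent with invariant probability $\pi^*$, the pathwise ergodic theorem (e.g.\ \cite{MT} for the skeleton, extended to continuous time) gives \eqref{slln} for every $\pi^*$-integrable $f$.

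The crux is the middle step: arranging that the \emph{geometric} drift in the $u+v$ and $u^{-1}$ directions coming from Lemma \ref{lem2.1} and the merely \emph{polynomial} drift in the $v^{-1}$ direction combine into one clean inequality $\E V\le V-\kappa_1V^{\gamma_k}+\kappa_2\1_K$ with a genuinely compact $K$ and a single exponent $\gamma_k\uparrow1$ — which is precisely what forces the $2k$-th moment bootstrap of the first step. Everything else is either already available (Lemmas \ref{lem2.1}, \ref{lem2.3}, \ref{petite} and Propositions \ref{prop2.3}--\ref{prop2.4}) or a routine invocation of standard Markov-process theory.
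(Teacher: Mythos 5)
Your architecture coincides with the paper's: a subgeometric drift condition for the skeleton chain $\big(S_{u,v}(2nT^*),I_{u,v}(2nT^*)\big)$ built from Lemma \ref{lem2.1} and Propositions \ref{prop2.3}--\ref{prop2.4}, fed into \cite[Theorem 3.6]{JR} via the petite-set Lemma \ref{petite}; monotonicity of $\|P(t,(u,v),\cdot)-\pi^*\|$ in $t$ to pass from the skeleton to continuous time; Kliemann's results together with Lemma \ref{lem2.3} for the support; and the Harris ergodic theorem for \eqref{slln}. The one genuinely different step is how the rate is made arbitrarily fast. The paper keeps second moments and replaces $[\ln v]_-^2$ by $[\ln v]_-^{1+q}$ with $q$ small, claiming a drift of the form $-h_qV_q^{1/(1+q)}$; you instead raise the power to $[\ln v]_-^{2k}$ and derive a drift $-\kappa_1 V^{1-1/(2k)}$. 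Your variant is the one consistent with the mechanism actually producing the drift: the favourable event of Lemma \ref{lem2.5} yields an \emph{additive} decrease $[\ln I_{u,v}(2T^*)]_-\leq[\ln v]_--\tfrac{3\lambda T^*}{4}$, so the $m$-th power decreases by $\Theta\big([\ln v]_-^{\,m-1}\big)$ per step. With $m=2k$ this is $V^{1-1/(2k)}$ and \cite{JR} returns the exponent $2k-1\uparrow\infty$, exactly as you compute; with $m=1+q<2$ the honest decrease is only $O\big([\ln v]_-^{\,q}\big)=O\big(V_q^{\,q/(1+q)}\big)$, which would give the exponent $q<1$ rather than $1/q$. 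So your route is not merely an alternative but the natural way to substantiate the ``any polynomial rate'' claim. Two cautions. First, the $2k$-th moment bootstrap is only sketched: in the analogue of Proposition \ref{prop2.4} the cross terms $\binom{2k}{j}C_jt^j[\ln v]_-^{2k-j}\sqrt{\PP(\Omega_5)}$ with $j\geq2$ must be absorbed via Young's inequality into $\delta[\ln v]_-^{2k-1}+C_\delta t^{(\cdot)}$, and $\eps$ (hence $H$, $T^*$, $\delta$) must be re-chosen for each $k$; since one $k$ suffices for each fixed $q^*$, this is harmless but should be stated. Second, the support of $\pi^*$ is the closure of the \emph{invariant control set}, not of the reachable set from an arbitrary point (the latter can be strictly larger); you recover the correct set from Lemma \ref{lem2.3} in the end, but the intermediate phrasing should be fixed.
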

\begin{proof}
By virtue of Lemma \ref{lem2.1}, there are $h_1>0, H_1>0$ satisfying
\begin{equation}\label{e0.3}
\E U\Big(S_{u,v}(2T^*),I_{u,v}(2T^*)\Big)\leq (1-h_1)U(u,v)+H_1\,\,\forall (u,v)\in\R^{2,\circ}_+.
\end{equation}
Let $V(u,v)=U(u,v)+[\ln v]_-^2$.
In view of Propositions \ref{prop2.3}, \ref{prop2.4} and \eqref{e0.3},
there is a compact set $K\subset\R^{2,\circ}_+$, $h_2>0, H_2>0$ satisfying
\begin{equation}\label{e0.4}
\E V(S_{u,v}(2T^*),I_{u,v}(2T^*))\leq V(u,v)-h_2\sqrt{V(u,v)}+H_2\1_{\{(u,v)\in K\}}\,\forall (u,v)\in\R^{2,\circ}_+.
\end{equation}
Applying \eqref{e0.4} and Lemma \ref{petite} to \cite[Theorem 3.6]{JR} we obtain that
\begin{equation}\label{e0.5}
n\|P(2nT^*,(u,v),\cdot)-\pi^*\|\to 0 \text{ as } n\to\infty,
\end{equation}
for some invariant probability measure $\pi^*$ of the Markov chain $(S_{u,v}(2nT^*), I_{u,v}(2nT^*))$.
It is shown in the proof of \cite[Theorem 3.6]{JR} that \eqref{e0.4} implies $\E \tau_K<\infty$ where $\tau_K=\inf\{n\in\N: (S_{u,v}(2nT^*), I_{u,v}(2nT^*))\in K\}$.
In view of \cite[Theorem 4.1]{WK},
the Markov process $(S_{u,v}(t), I_{u,v}(t))$ has an invariant probability measure $\pi_*$.
As a result, $\pi_*$ is also an invariant probability measure of the Markov chain $(S_{u,v}(2nT^*), I_{u,v}(2nT^*))$.
In light of \eqref{e0.5}, we must have $\pi_*=\pi^*$, that is, $\pi^*$ is an invariant measure of
the Markov process $(S_{u,v}(t), I_{u,v}(t))$.

In the proofs, we use the function $[\ln v]_-^2$ for the sake of simplicity.
In fact, we can treat $[\ln v]_-^{1+q}$ for any small $q\in(0,1)$ in the same manner.
We can show that there is $h_q, H_q>0$, a compact set $K_q$ satisfying
\begin{equation}\label{e0.6}
\E V_q(S_{u,v}(2T^*),I_{u,v}(2T^*))\leq V_q(u,v)-h_q[V_q(u,v)]^{\frac1{1+q}}+H_q\1_{\{(u,v)\in K_p\}}\,\forall (u,v)\in\R^{2,\circ}_+,
\end{equation}
where $V_q(u,v)=U(u,v)+[\ln v]_-^{1+q}.$
Then applying \cite[Theorem 3.6]{JR}, we can obtain
$$n^{1/q}\|P(2nT^*,(u,v),\cdot)-\pi\|\to 0 \text{ as } n\to\infty.$$
Since $\|P(t,(u,v),\cdot)-\pi^*\|$ is decreasing in $t$, we easily deduce
$$t^{q^*}\|P(t,(u,v),\cdot)-\pi^*\|\to 0 \text{ as } t\to\infty$$
where $q^*=1/q\in(1,\infty)$.

On the one hand, in view of Lemma \ref{lem2.3}, the invariant control set of \eqref{e3.2}, says $\cC$, is $\R^{2,\circ}$ if $d^*\leq0$ and $\{(u,v)\in\R^{2,\circ}_+: u^rv\geq d\}$ if $d^*>0$.
By \cite[Lemma 4.1]{WK},  $\cC$ is exactly the support of the unique invariant measure $\pi^*$.
The strong law of large number can be obtained by using \cite[Theorem 8.1]{MT2} or \cite{WK}.
\end{proof}

\section{Discussion and Numerical Examples}\label{sec:ex}
\subsection{Discussion}
We have shown that the extinction and permanence of the disease in a stochastic SIR model can be determined by the sign of a threshold value $\lambda$. Only the critical case $\lambda=0$ is not studied in this paper.
To illustrate the significance of our results, let us compare our results with those in  \cite{LJX}.

\begin{thm}\textup {\cite[Theorem 3.1]{LJX}}\label{thm3.1bs} Assume that $\sigma_1>0, \sigma_2>0$. Let $(S_{u,v}(t), I_{u,v}(t))$  be a solution of system
\eqref{e2.11ss}.
If $\mu>\sigma_1^2, \mu+\rho+\gamma>\sigma_2^2, R_0>1$ and
$$\delta<\min\left\{\frac{\mu^2}{\mu-\sigma_1^2}{S^*}^2,  \frac{(\mu+\rho+\gamma)^2}{\mu+\rho+\gamma-\sigma_2^2}{I^*}^2 \right\}.$$
Then there exists a stationary distribution $\pi^*$ for the Markov process $(S_{u,v}(t), I_{u,v}(t))$ which is the limit in total variation of transition probability $ P(t, (u,v),\cdot)$. Here
$$\delta=\frac{\mu\sigma_1^2}{\mu-\sigma_1^2}{S^*}^2
+\frac{(\mu+\rho+\gamma)\sigma_2^2}{\mu+\rho+\gamma-\sigma_2^2}{I^*}^2
+\frac{(\mu+\rho+\gamma}{2\beta}{I^*}\sigma_2^2,$$
$$S^*=\frac{\mu+\rho+\gamma}{\beta},\; I^*=\frac{\alpha}{\mu+\rho+\gamma}-\frac{\mu}{\beta}; \;\;R_0=\frac{\beta\alpha}{\mu(\mu+\rho+\gamma)}.$$
\end{thm}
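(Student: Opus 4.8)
\emph{Proof proposal.} The statement is recalled from \cite{LJX}; here I sketch the route one would take to prove it, which is the classical Has'minskii existence-and-ergodicity scheme carried out with explicit constants. The plan has three parts: (i) locate the endemic equilibrium of the deterministic flow; (ii) build a Foster--Lyapunov function centred at it and verify a drift inequality outside a compact set, this being where the quantitative hypotheses on $\delta$ enter; and (iii) feed that inequality together with the hypoellipticity from Lemma~\ref{asp3.1} into the Has'minskii criterion. For part (i) one notes that, because $R_0>1$, the drift of \eqref{e2.11ss} has an interior equilibrium $(S^*,I^*)$ with $S^*=(\mu+\rho+\gamma)/\beta$ and $I^*=\alpha/(\mu+\rho+\gamma)-\mu/\beta>0$, satisfying $\alpha=\mu S^*+\beta S^*I^*$ and $\mu+\rho+\gamma=\beta S^*$; these two relations are exactly what make the cross terms below telescope.

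For part (ii) I would take a Volterra--type function
\[
V(S,I)=\Big(S-S^*-S^*\ln\tfrac{S}{S^*}\Big)+\Big(I-I^*-I^*\ln\tfrac{I}{I^*}\Big)+\theta\,W(S,I),
\]
where $W\ge0$ is an auxiliary term with a small coefficient $\theta>0$ (for instance a combination of $(S+I)^{1+p}$, $S^{-p}$ and $I^{-p}$) whose role is to force $V\to\infty$ as $(S,I)$ approaches $\partial\R_+^{2,\circ}$ or escapes to infinity, and to supply $LV$ with the control in the $I$--direction that the main term lacks after the cancellation. Applying the It\^o generator $L$ of \eqref{e1.1} to the main term, the first-order part collapses, via the equilibrium relations and the standard Lotka--Volterra identities, to
\[
-\mu\,\frac{(S-S^*)^2}{S}-\beta S^*I^*\Big(\frac{S}{S^*}+\frac{S^*}{S}-2\Big)\le0,
\]
while the second-order part contributes the constants $\tfrac12\sigma_1^2S^*+\tfrac12\sigma_2^2I^*$ together with remainder terms controlled by $\sigma_1^2S^2$, $\sigma_2^2I^2$ and $\sigma_2^2I$ (the last coming from the cross term in $d(S+I)$, which is where the summand $\frac{\mu+\rho+\gamma}{2\beta}I^*\sigma_2^2$ of $\delta$ originates). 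The conditions $\mu>\sigma_1^2$ and $\mu+\rho+\gamma>\sigma_2^2$ keep the coefficients of the $S^2$ and $I^2$ contributions strictly negative once the diffusion correction is absorbed, and the smallness condition $\delta<\min\{\ldots\}$ — $\delta$ being precisely the sum of the positive constants $\frac{\mu\sigma_1^2}{\mu-\sigma_1^2}{S^*}^2$, $\frac{(\mu+\rho+\gamma)\sigma_2^2}{\mu+\rho+\gamma-\sigma_2^2}{I^*}^2$ and $\frac{\mu+\rho+\gamma}{2\beta}I^*\sigma_2^2$ — is exactly what lets the negative-definite part dominate whatever is left over. Hence $LV(S,I)\le-1$ for all $(S,I)$ outside a suitable compact set $K\subset\R_+^{2,\circ}$.

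For part (iii), by Lemma~\ref{asp3.1} the diffusion satisfies the H\"ormander condition on $\R_+^{2,\circ}$, so the transition kernel has a smooth strictly positive density; in particular the process is irreducible, strong Feller, and every compact subset of $\R_+^{2,\circ}$ is petite. Inserting the drift inequality $LV\le-1$ off $K$ together with these regularity properties into the Has'minskii criterion (see, e.g., \cite{MT}) produces a unique invariant probability measure $\pi^*$ and the total-variation convergence $\|P(t,(u,v),\cdot)-\pi^*\|\to0$ as $t\to\infty$, for every $(u,v)\in\R_+^{2,\circ}$, which is the asserted conclusion.

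The genuinely delicate part is (ii): converting the qualitative fact that ``the noise is small relative to the gaps $\mu-\sigma_1^2$ and $\mu+\rho+\gamma-\sigma_2^2$'' into the \emph{sharp} threshold displayed for $\delta$. One must split the indefinite products $SI$ and the remainders generated by $\theta W$ with Young's inequality without surrendering more room than that threshold permits, and carry the equilibrium values $S^*,I^*$ through the estimates explicitly; any wasteful bound produces a strictly more restrictive hypothesis on $\delta$ than the one stated. This is precisely the bookkeeping that the $\lambda$-based argument of the present paper is designed to bypass.
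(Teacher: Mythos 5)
The first thing to say is that the paper contains no proof of this statement: Theorem~\ref{thm3.1bs} is quoted verbatim from \cite[Theorem 3.1]{LJX} purely so that its hypotheses can be compared with the condition $\lambda>0$ of Theorem~\ref{thm2.2}, and the authors explicitly defer to \cite{LJX} for the argument. So there is no in-paper proof to measure your sketch against; the relevant benchmark is the cited source, and your three-part plan (endemic equilibrium from $R_0>1$, a Volterra-type Foster--Lyapunov function, then a Has'minskii/Khasminskii-type criterion combined with the hypoellipticity of Lemma~\ref{asp3.1}) is indeed the route taken there. As a description of strategy it is accurate, and you are candid that it is a sketch rather than a proof.

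Two substantive points of imprecision are worth flagging. First, the role of the hypothesis $\delta<\min\{\cdots\}$ is not that ``the negative-definite part dominates whatever is left over'' globally; the drift computation yields an inequality of the shape $LV\le -(\mu-\sigma_1^2)(S-S^*)^2-(\mu+\rho+\gamma-\sigma_2^2)(I-I^*)^2+\delta$, and the stated bound on $\delta$ is precisely the geometric condition that the ellipsoid $\{LV\ge0\}$, centred at $(S^*,I^*)$, lies strictly inside the open quadrant $\R^{2,\circ}_+$ (its semi-axes do not reach the coordinate axes), so that the exceptional compact set can be chosen away from the boundary. Second, your proposed auxiliary term $W$ built from $(S+I)^{1+p}$, $S^{-p}$, $I^{-p}$ would not reproduce the specific constant $\delta$ in the statement; the three summands of $\delta$ arise from completing squares in $L\bigl[\tfrac12(S+I-S^*-I^*)^2\bigr]$ (e.g.\ $\sigma_1^2S^2\le\sigma_1^2(1+\eps)(S-S^*)^2+\frac{\mu\sigma_1^2}{\mu-\sigma_1^2}{S^*}^2$ with the optimal $\eps$), so the quadratic $(S+I-S^*-I^*)^2$ is the auxiliary term actually needed. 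With those corrections the sketch matches the cited proof; as written, the decisive drift inequality of part (ii) is asserted rather than established.
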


By straightforward calculations or by arguments in Section 4 of \cite{DDY} we can show that their conditions are much more restrictive than the condition $\lambda>0$.
Moreover, it should be noted that Theorem \ref{thm2.1}  is the same as Lemma 3.5 in \cite{LJX}.
In contrast to the aforementioned paper, we provide a  rigorous proof of
Theorem \ref{thm2.1} here.
Moreover, the conclusions in Theorems \ref{thm2.1} and \ref{thm2.2} still hold for the non-degenerate model \eqref{e1.1a}.
As a result we have the following theorem for
model \eqref{e1.1a}.

\begin{thm}\label{thm3.2}
Let $(S_{u,v}(t), I_{u,v}(t))$ be the solution to \eqref{e1.1a} with initial value $(S(0), I(0))=(u,v)\in\R^{2,\circ}_+$. Define $\lambda$ as \eqref{lambda}.
If $\lambda<0$, then $\lim\limits_{t\to\infty}I_{u,v}(t)=0$ a.s. and the distribution of $S_{u,v}(t)$ converges weakly to
 $\mu^*$, which has the density \eqref{density}.
If $\lambda>0$, the solution process $(S_{u,v}(t), I_{u,v}(t))$ has a unique invariant probability measure $\varphi^*$ whose support is $\R^{2,\circ}_+$. Moreover, the transition probability $P(t, (u,v),\cdot)$ of  $(S_{u,v}(t), I_{u,v}(t))$ converges to $\varphi^*(\cdot)$ in total variation. The rate of convergence is bounded above by any polynomial rate. Moreover, for any $\varphi^*$-integrable function
$f$, we have
\begin{equation*}
\PP\Big\{\lim\limits_{t\to\infty}\dfrac1t\int_0^tf\big(S_{u,v}(\tau), I_{u,v}(\tau)\big)d\tau=\int_{\R^{2,\circ}_+}f(u',v')\varphi^*(du', dv')\Big\}=1 \  \ \forall (u,v)\in\R^{2,\circ}_+.
\end{equation*}
 \end{thm}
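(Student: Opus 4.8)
The plan is to reduce Theorem~\ref{thm3.2} to Theorems~\ref{thm2.1} and \ref{thm2.2} by checking that every structural ingredient used in their proofs survives when the single Brownian motion $B$ in \eqref{e1.1} is replaced by the independent pair $B_1,B_2$ in \eqref{e1.1a}. For the case $\lambda<0$, first I would note that the boundary equation for $\hat S_u(t)$ is driven only by $B_1$ and is unchanged, so the density \eqref{density}, the ergodic limit \eqref{e2.3'}, and the weak convergence of $\hat S_u(t)$ to $\mu^*$ carry over verbatim. The comparison $S_{u,v}(t)\le\hat S_u(t)$ still holds, and the auxiliary process $\hat I_v(t)$ now satisfies $d\hat I=\hat I(-(\mu+\rho+\gamma)+\beta\hat S_u)dt+\sigma_2\hat I\,dB_2$; the It\^o computation in \eqref{e2.6} is identical because only the quadratic variation $\sigma_2^2\,dt$ and the SLLN $B_2(t)/t\to0$ enter. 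Hence $\limsup t^{-1}\ln I_{u,v}(t)\le\lambda<0$, giving $I_{u,v}(t)\to0$ a.s. The weak-convergence argument for $S_{u,v}(t)$ via the Portmanteau theorem and \eqref{e2.8}--\eqref{e2.10} goes through unchanged, since it only uses the pathwise smallness of $\ln\hat S_u-\ln S_{u,v}$ coming from $\beta\int I_{u,v}$.

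For the case $\lambda>0$, the scheme is: (i) H\"ormander/hypoellipticity, (ii) support of $\pi^*$, (iii) the Lyapunov/Foster--Lyapunov estimates feeding \cite[Theorem 3.6]{JR}, (iv) petiteness, irreducibility, aperiodicity. Point (i) is in fact easier: with two independent noises the diffusion matrix $\mathrm{diag}(\sigma_1 u,\sigma_2 v)$ is already nondegenerate on $\R^{2,\circ}_+$, so the transition probability has a smooth positive density immediately, and the Lie-bracket discussion of Lemma~\ref{asp3.1} is not needed. For (ii), since the noise is nondegenerate and the drift of \eqref{e1.1a} keeps $\R^{2,\circ}_+$ invariant and is (by the $\phi_*$-type argument of Lemma~\ref{petite}) such that the deterministic controlled system can reach any point of $\R^{2,\circ}_+$, the support theorem gives $\mathrm{supp}\,\varphi^*=\R^{2,\circ}_+$; equivalently, the constraint $u^rv\ge d^*$ disappears because the two noises are no longer proportional. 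For (iii), I would re-examine Lemmas~\ref{lem2.1}, \ref{lem2.4}, \ref{lem2.5} and Propositions~\ref{prop2.3}, \ref{prop2.4}: the generator $LU$ now carries $\frac{(1+p^*)p^*}{2}(u+v)^{p^*-1}(\sigma_1^2u^2+\sigma_2^2v^2)$ in place of $(\sigma_1u+\sigma_2v)^2$, which is if anything more favorable, so the bound \eqref{eU} persists with the same range of $p^*$; the bound $-\ln I_{u,v}(t)\le-\ln v+c_2t+|\sigma_2|\,|B_2(t)|$ in Lemma~\ref{lem2.4} is unchanged in form; and Lemma~\ref{lem2.5}, Propositions~\ref{prop2.3}--\ref{prop2.4} use only the comparison with the $B_1$-driven $\tilde S_u$, the SLLN for $B_1$ and the exponential martingale inequality for the $B_2$-martingale $\sigma_2 B_2(\cdot)$, all of which still apply. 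Thus \eqref{e0.3}--\eqref{e0.6} hold for \eqref{e1.1a}, and \cite[Theorem 3.6]{JR} yields the polynomial rate for the skeleton chain; monotonicity of $\|P(t,(u,v),\cdot)-\varphi^*\|$ in $t$ upgrades this to continuous time. Existence and uniqueness of $\varphi^*$ as an invariant measure of the full process, and the strong law \eqref{slln}, follow from \cite[Theorems 4.1]{WK} and \cite[Theorem 8.1]{MT2} exactly as in the proof of Theorem~\ref{thm2.2}.

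Since essentially every step is a verbatim repetition or a simplification, the proof itself can be written very briefly: state that the boundary equation \eqref{e2.1} and hence $\mu^*$ are identical, that $I_{u,v}$ is controlled by a $B_2$-driven log-linear SDE so Theorem~\ref{thm2.1}'s argument applies word for word, that in the nondegenerate setting hypoellipticity is automatic and the support is all of $\R^{2,\circ}_+$, and that the Lyapunov inequalities \eqref{eU} and \eqref{e0.6} together with petiteness (Lemma~\ref{petite}, whose controllability input survives) let us invoke \cite[Theorem 3.6]{JR} and \cite[Theorems 4.1]{WK} to conclude. The only point needing a sentence of genuine attention — the ``main obstacle,'' such as it is — is verifying that replacing $(\sigma_1u+\sigma_2v)^2$ by $\sigma_1^2u^2+\sigma_2^2v^2$ in $LU$ does not shrink the admissible exponent set in Lemma~\ref{lem2.1}: one checks the cross term $(2\mu+\rho+\gamma-p^*\sigma_1\sigma_2)uv$ is simply replaced by $(2\mu+\rho+\gamma)uv$, which remains dominated by the diagonal terms for the same $p^*$, so \eqref{eU} is unaffected and no condition on $\sigma_1,\sigma_2$ beyond positivity is introduced.
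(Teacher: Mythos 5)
Your proposal is correct and follows exactly the route the paper intends: the paper states Theorem \ref{thm3.2} without a separate proof, merely remarking that the conclusions of Theorems \ref{thm2.1} and \ref{thm2.2} carry over to the non-degenerate model \eqref{e1.1a}, and your step-by-step verification (unchanged boundary equation, comparison arguments and It\^o computation for $\lambda<0$; automatic hypoellipticity, full controllability hence support equal to $\R^{2,\circ}_+$, and the harmless replacement of $(\sigma_1 u+\sigma_2 v)^2$ by $\sigma_1^2u^2+\sigma_2^2v^2$ in the Lyapunov computation for $\lambda>0$) is precisely the content of that remark. If anything, you supply more detail than the paper does, including the one point genuinely worth checking, namely that the altered cross term in $LU$ does not shrink the admissible range of $p^*$.
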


It should be emphasized that our techniques can be also used to improve results in \cite{CKBW,JJS, YWS}.

\subsection{Example}
 Let us finish this paper by providing some numerical examples.

\begin{example}\label{ex1}
Consider \eqref{e1.1} with parameters $\alpha=20$, $\beta=4$, $\mu=1$, $\rho=10$, $\gamma=1$, $\sigma_1=1$, and  $\sigma_2=-1.$
Direct calculation shows that $\lambda=67.5>0$, $d^*=7.75>0$, and $c^*=1.9375.$
By virtue of Theorem \ref{thm2.2},
\eqref{e1.1} has a unique invariant probability measure $\pi^*$ whose support is
    $\{(S, I): S\geq \frac{1.9375}{I}\}$.
Consequently, the strong law of large numbers and the convergence in total variation norm of the transition probability hold.
A sample path of solution to \eqref{e1.1} is illustrated by Figures \ref{f1.1s}, while the phase portrait in Figure \ref{f1.1ss} demonstrates that the support of $\pi^*$ lies above and includes the curve $S=\frac{c^*}{I}=\frac{1.9375}{I}$ as well as  the empirical  density of   $\pi^*$. In non-degenerate case (Eq. \eqref{e1.1a}), with this same set of parameters  the empirical  density of  $\pi^*$ is illustrated by Figure \ref{f1.1sss}.

   \begin{figure}[b]
\centering
\includegraphics[width=7.5682cm, height=5cm]{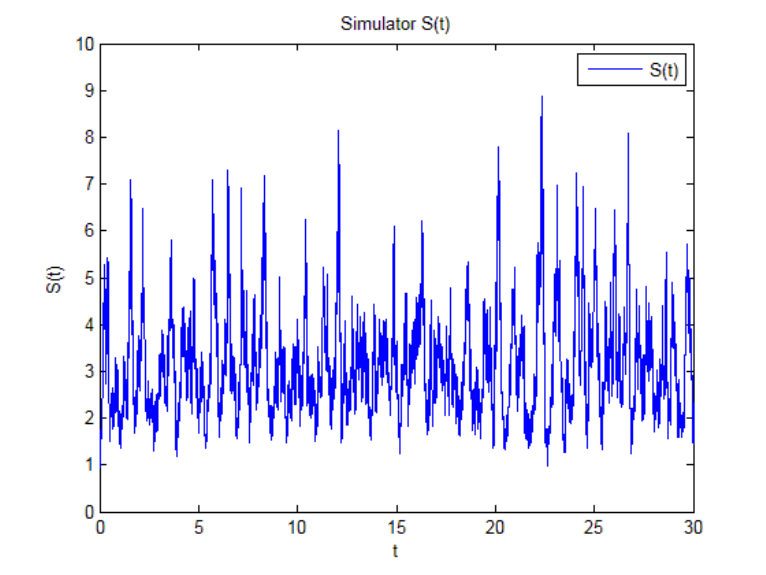}
\includegraphics[width=7.5682cm, height=5cm]{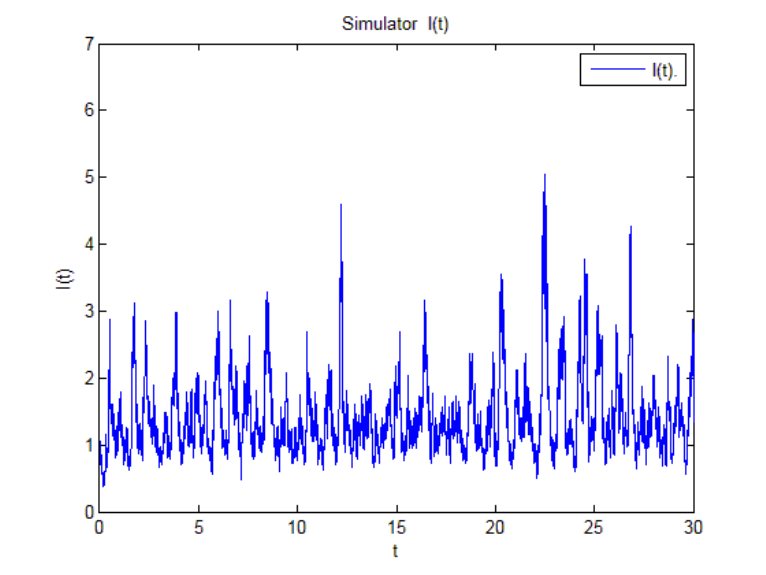}
\caption{Trajectories of $S_{u,v}(t), I_{u,v}(t)$ in Example \ref{ex1}.}\label{f1.1s}
\end{figure}

  \begin{figure}[p]
\centering
\includegraphics[width=7.5cm, height=5.5cm]{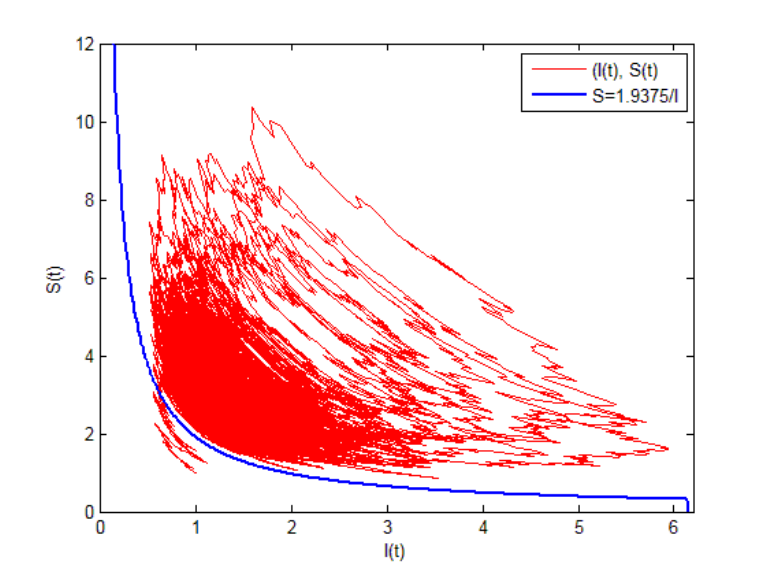}
\includegraphics[width=7.5cm, height=5.5cm]{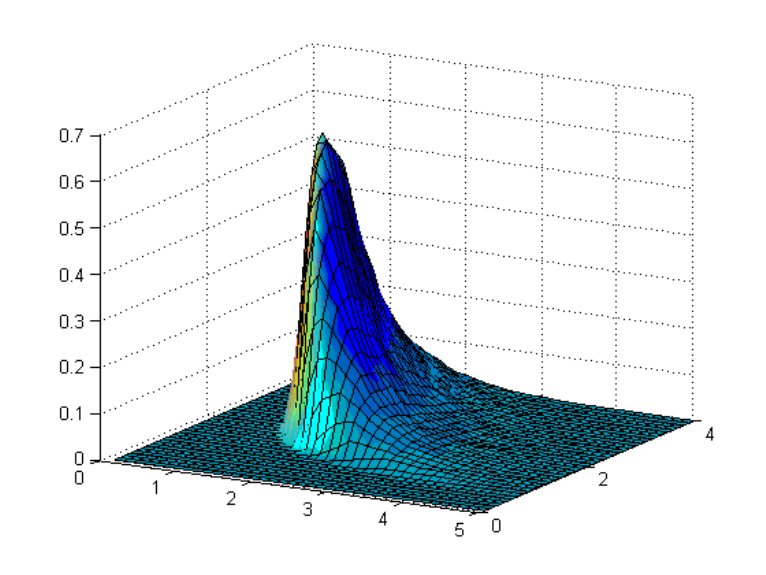}
\caption{Phase portrait of \eqref{e1.1};   the boundary $S=\frac{1.9375}{I}$ of the support of $\pi^*$  and the empirical  density of   $\pi^*$ in Example \ref{ex1}.}\label{f1.1ss}
\end{figure}

  \begin{figure}[p]
\centering
\includegraphics[width=8.5cm, height=6.5cm]{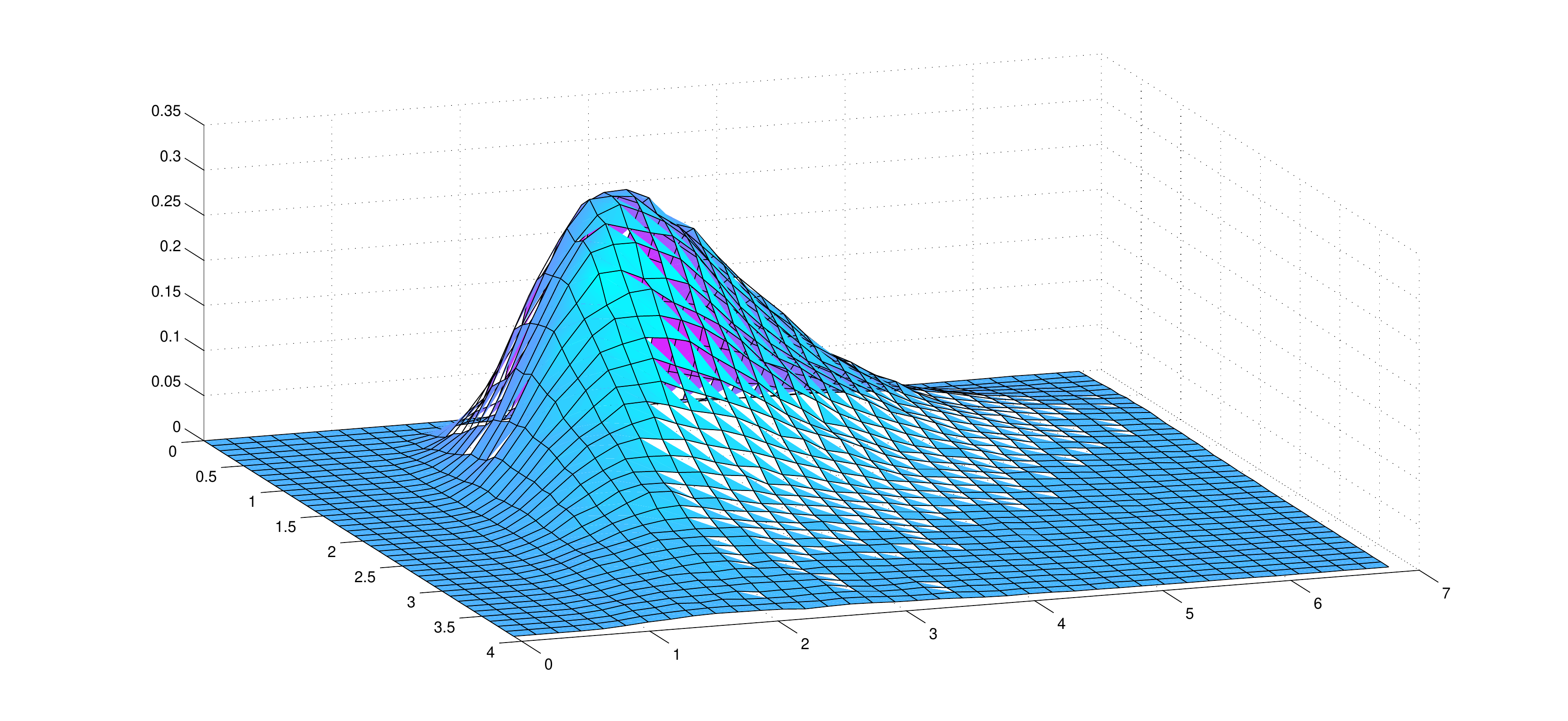}
\caption{The empirical  density of   $\varphi^*$ in Example \ref{ex1} for the non-degenerate equation \eqref{e1.1a}.}\label{f1.1sss}
\end{figure}
\end{example}

\begin{example}\label{ex2}
Consider \eqref{e1.1} with parameters $\alpha=7$, $\beta=3$, $\mu=1$, $\rho=1$, $\gamma=2$,
  $\sigma_1=1$, and $\sigma_2=1.$ For these parameters, the conditions in Theorem \ref{thm3.1bs} are not satisfied.
We obtain $\lambda=16.5>0, d^*=-\infty.$
       As a result of Theorem \ref{thm2.2},
        \eqref{e1.1} has a unique invariant probability measure $\pi^*$ whose support is
    $\R^{2,\circ}_+$.
Consequently, the strong law of large numbers and the convergence in total variation norm of the transition probability hold.
A sample path of solution to \eqref{e1.1} is
 depicted in Figures \ref{f1.2}, while the phase portrait in Figure \ref{f1.22} demonstrates that the support of $\pi^*$ and the empirical  density of   $\pi^*$.

  \begin{figure}[p]
\centering
\includegraphics[width=7.5682cm, height=5cm]{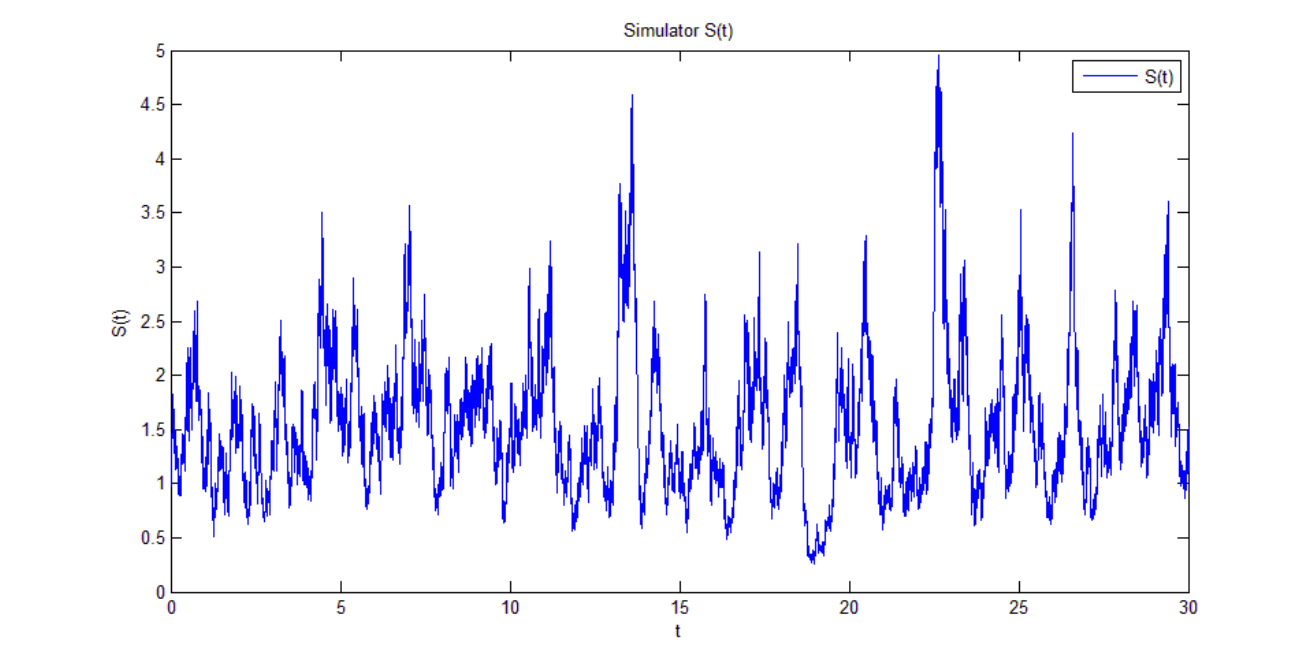}
\includegraphics[width=7.5682cm, height=5cm]{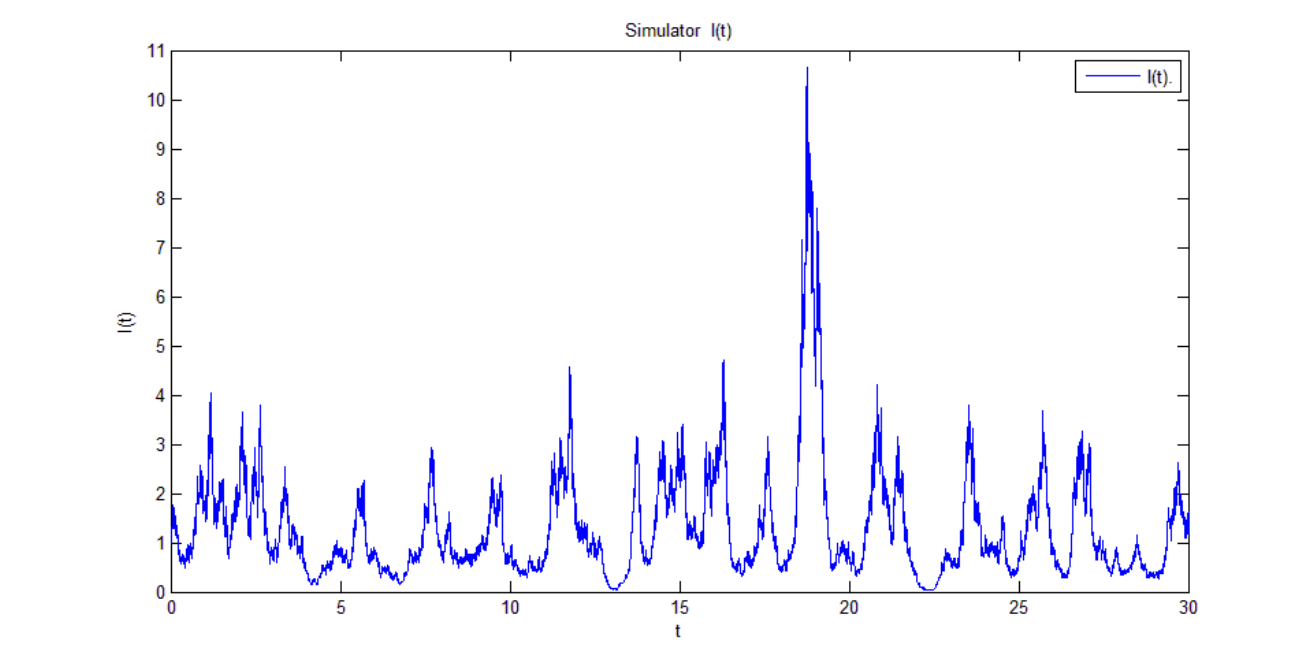}
\caption{ Trajectories of $S_{u,v}(t), I_{u,v}(t)$ in Example \ref{ex2}.}\label{f1.2}
\end{figure}

  \begin{figure}[p]
\centering
\includegraphics[width=8.868cm, height=5cm]{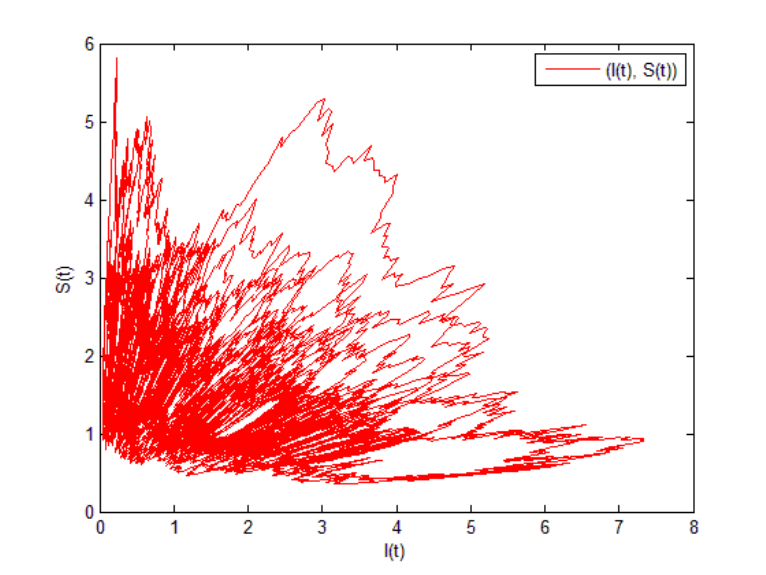}
\includegraphics[width=7cm, height=6cm]{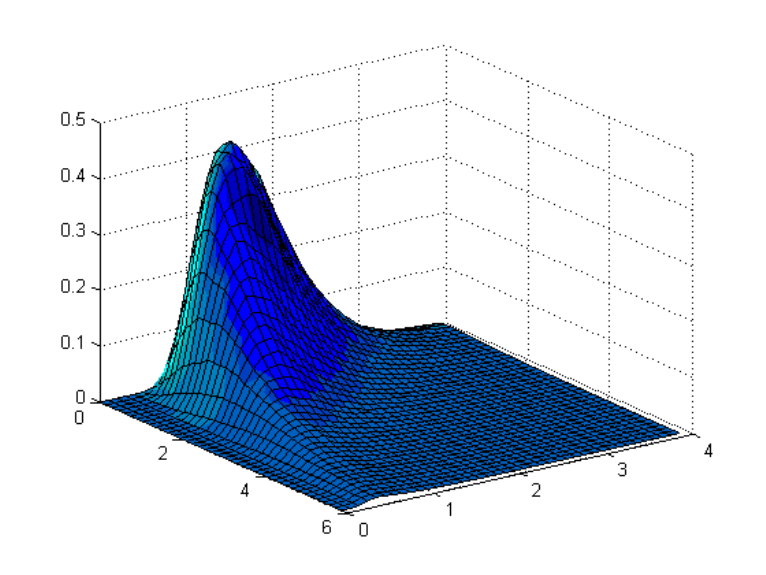}
\caption{Phase portrait of \eqref{e1.1} and  the empirical  density of   $\pi^*$  in Example \ref{ex2}.}\label{f1.22}.
\end{figure}
\end{example}

\begin{example}\label{ex3}
Consider \eqref{e1.1} with parameters $\alpha=5$, $\beta=5$, $\mu=4$, $\rho=1$, $\gamma=1$, $\sigma_1=2,$
and $\sigma_2=-1.$
It can be shown that $\lambda=-1.75<0$
 As a result of Theorem \ref{thm2.1},
  $I_{u,v}(t)\to0$ a.s. as $t\to\infty$. This claim is
  supported by  Figures \ref{f4.1}.
  That is, the population will eventually have no  disease. The distribution of  $S_{u,v}(t)$ convergence to $f^*(x)$ as $t\to\infty.$ The graphs of $f^*(x)$ and empirical  density of $S_{u,v}(t)$ at $t=50$ are illustrated by Figure \ref{f4.2}.

  \begin{figure}[p]
\centering
\includegraphics[width=7.5682cm, height=5cm]{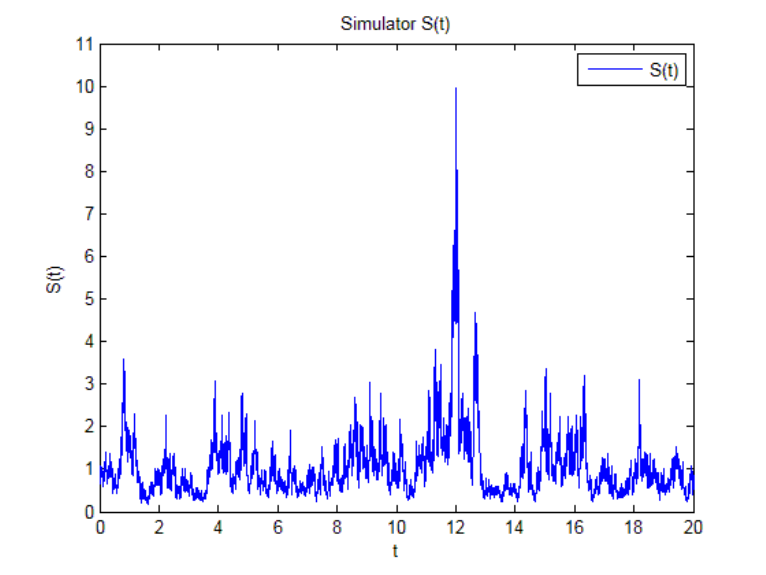}
\includegraphics[width=7.5682cm, height=5cm]{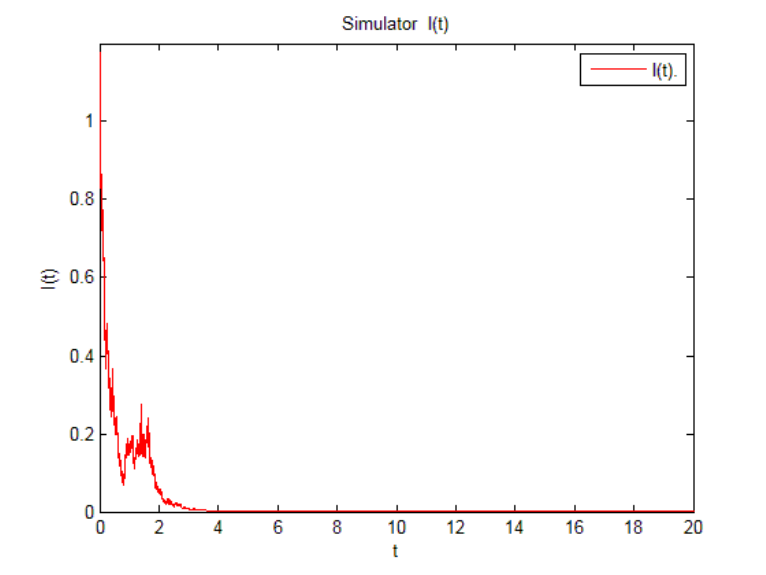}
\caption{ Trajectories of $S_{u,v}(t), I_{u,v}(t)$ in Example \ref{ex3}.}\label{f4.1}
\end{figure}
  \begin{figure}[htp]
\centering
\includegraphics[width=10cm, height=7cm]{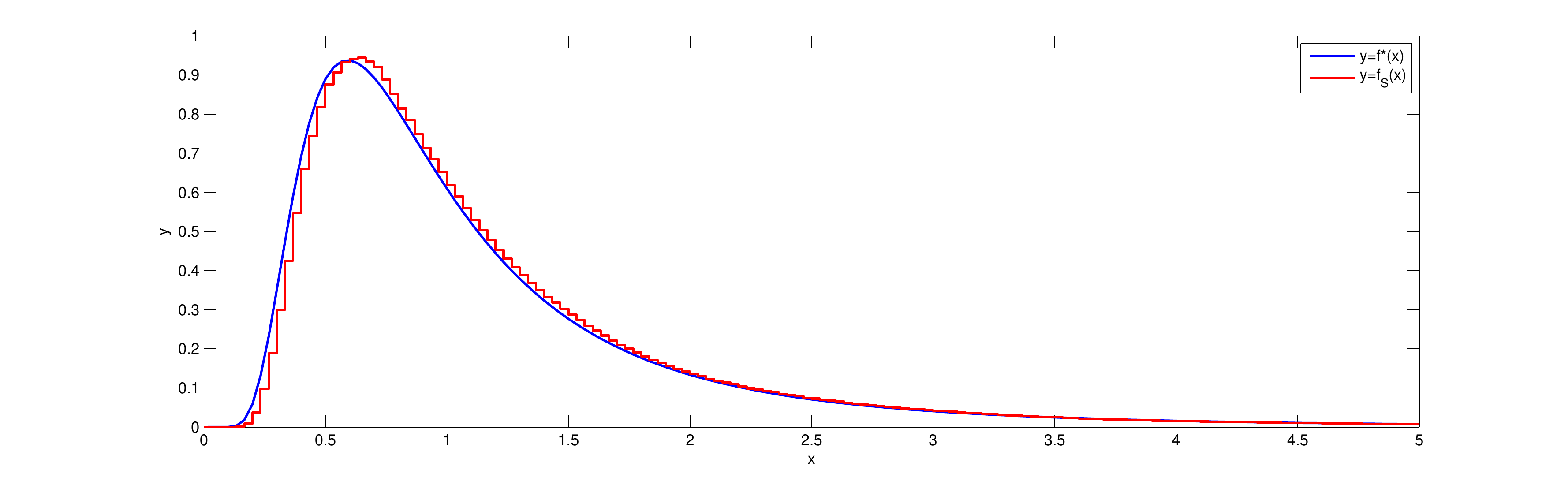}
\caption{The graph of the stationary density $f^*$ (in blue) and the graph of  the empirical  density of $S(50)$ (in red) in Example \ref{ex3}.}\label{f4.2}
\end{figure}

\end{example}


\begin{thebibliography}{}
\bibitem{ABMSGS} M. E. Alexander,  C. Bowman,  S. M. Moghadas, R. Summers,  A. B.Gumel,  B. M. Sahai,  A vaccination model for transmission dynamics of influenza, \emph{SIAM J. Appl. Dyn. Syst.,} 3 (2004), no. 4, 503-524.

\bibitem{BS} F. Ball, D. Sirl, An SIR epidemic model on a population with random network and household structure, and several types of individuals, \emph{ Adv. in Appl. Probab.,} 44 (2012), no. 1, 63-86.
\bibitem{BP} M. Barczy, G. Pap,  Portmanteau theorem for unbounded measures, \emph{Statist. Probab.
Lett}., 76(2006), 1831-1835.

\bibitem{BC} F. Brauer, C. C. Chavez, \emph{ Mathematical models in population biology and epidemiology}, Springer-Verlag New York, 2012.
\bibitem{CKBW} Y. Cai, Y. Kang, M. Banerjee, W. Wang,  A stochastic SIRS epidemic model with infectious force under intervention strategies. \emph{J. Differential Equations} 259 (2015), no. 12, 7463-7502.

\bibitem{vC}  V. Capasso,  \emph{Mathematical Structures of Epidemic Systems}, Springer-Verlag, Berlin, 1993.

\bibitem{DDY} N.H. Du, D.H. Nguyen, G. Yin,  Conditions for permanence and ergodicity of certain stochastic predator-prey models, to appear in {\sl J. Appl. Probab.}
\bibitem{GC} M. Gathy, C. Lefevre, Claude From damage models to SIR epidemics and cascading failures,\emph{ Adv. in Appl. Probab., }41 (2009), no. 1, 247-269.

\bibitem{GGHMP} A. Gray, D. Greenhalgh,  L. Hu, X. Mao,  J. Pan,  A stochastic differential equation SIS epidemic model, \emph{SIAM J. Appl. Math.,} 71 (2011), no. 3, 876-902.

\bibitem{IK} K. Ichihara,  H. Kunita, A classification of the second order degenerate elliptic operators and its probabilistic characterization, {\it Z. Wahrsch. Verw. Gebiete,} {30} (1974), 235-254. Corrections  { 39}(1977), 81-84.

\bibitem{IW} N. Ikeda, S. Watanabe, {\it Stochastic differential equations and diffusion processes,} second edition, North-Holland Publishing Co., Amsterdam, (1989).
\bibitem{JR}S. F. Jarner,  G. O. Roberts,   Polynomial convergence rates of Markov chains, \emph{Ann. Appl. Prob.,} 12 (2002), 224-247.

\bibitem{JYS} D.Q. Jiang,   J.J. Yu,  C.Y. Ji,  N.Z. Shi, Asymptotic behavior of global positive solution to a stochastic SIR model, \emph{ Math. Comput. Modell.,} {54}(2011), 221-232.
\bibitem{JJS}   C.Y. Ji, D.Q. Jiang,  N.Z. Shi,  The behavior of an SIR epidemic model with stochastic perturbation, \emph{Stochastic Anal. Appl.,} {30} (2012), 755-773.



\bibitem{KM} W.O. Kermack, A. G. McKendrick, Contributions to the mathematical theory of epidemics, (part I), \emph{Proc. R. Soc. Lond. Ser. A,} {115} (1927), 700-721.
 
 \bibitem{KM1}W. O. Kermack, A. G. McKendrick, Contributions  to  the  mathematical  theory  of
epidemics,
(part  II), \emph{ Proc.  Roy.  Sot.  Ser.  A}, {138} (1932), 55-83.



\bibitem{WK} W. Kliemann,  Recurrence and invariant measures for degenerate diffusions, {\it Ann. Probab.,} {15} (1987), no. 2, 690-707.

\bibitem{KRW} D.H. Knipl, G. Rost, J.  Wu,  Epidemic spread and variation of peak times in connected regions due to
travel-related infections-dynamics of an antigravity-type delay differential model. \emph{SIAM J. Appl. Dyn. Syst.}, 12 (4) (2013), 1722-1762.


\bibitem{Kor} I. Kortchemski,  A predator-prey SIR type dynamics on large complete graphs with three phase transitions, \emph{Stochastic Process. Appl.,} 125 (2015), no. 3, 886-917.
\bibitem{LJX} Y. Lin,    D. Jiang, P. Xia, Long-time behavior of a stochastic SIR model, \emph{Appl. Math. Comput.,} {236} (2014), 1-9.
\bibitem{MAO} X. Mao,  \emph{ Stochastic differential equations and their applications}, Horwood Publishing chichester, 1997.
\bibitem{MT} S. P. Meyn, R. L. Tweedie,  {\it Markov Chains and Stochastic Stability,} Springer, London,  1993.
\bibitem{MT2} S. P. Meyn, R. L. Tweedie, Stability of markovian processes II: continuous-time processes and sampled chains. {\it Adv. in Appl. Probab.}, (1993), 487-517.
\bibitem{DY}  D.H. Nguyen, G. Yin, Coexistence and Exclusion of Stochastic Competitive
Lotka-Volterra Models, submitted.
\bibitem{EN} E. Nummelin, {\it General Irreducible Markov Chains and Non-negative Operations}, Cambridge  Press, (1984).
\bibitem{SBKS} F. Selley, A.  Besenyei,  I.Z. Kiss, P.L. Simon,  Dynamic control of modern, network-based epidemic models. \emph{SIAM J. Appl. Dyn. Syst.,} 14 (2015), no. 1, 168-187.
\bibitem{WZ} W. Wang,  X. Q. Zhao,  Basic reproduction numbers for reaction-diffusion epidemic models. \emph{SIAM J. Appl. Dyn. Syst.,} 11 (2012), no. 4, 1652-1673.

\bibitem{YWS} Y. Zhou,    W. Zhang,  S. Yuan, Survival and stationary distribution of a SIR epidemic model with stochastic perturbations, \emph{Appl. Math. Comput.,} {244} (1) (2014),  118-131.
\end{thebibliography}
\end{document}